 \newtheorem{theorem}{Theorem}[section]
 \newtheorem{corollary}[theorem]{Corollary}
 \newtheorem{proposition}[theorem]{Proposition}
 \newtheorem{lemma}[theorem]{Lemma}
 \newtheorem{example}[theorem]{Example}
 \theoremstyle{definition}
 \newtheorem{definition}[theorem]{Definition}
 \theoremstyle{remark}
 \newtheorem{remark}[theorem]{Remark}
 \numberwithin{equation}{section}
\begin{document}

%
%
%
%
%
%
%
%
%

 \title[Best approximations in metric spaces with property strongly UC ]{Best approximations in metric spaces with property strongly UC}

 \author[A. Digar]{Abhik Digar}

 \address{%
 Post Doctoral Fellow, 
 Department of Mathematics \& Statistics,
 IIT Kanpur,
 Kalyanpur,
 Uttar Pradesh, India 208 016.}

\email{abhikdigar@gmail.com}

 \subjclass{47H10, 46C20, 54H25.}

 \keywords{best approximation, Kadec Klee property, property strongly UC, almost cyclic $\psi$-contraction, reflexive Banach space, metric space.}


 \begin{abstract}
In this article, we introduce a geometrical notion, property strongly UC which is stronger than property UC and prove the existence of best approximations for a new class of almost cyclic $\psi$-contraction maps in a metric space. As a particular case, we obtain the main results of [Sadiq Basha, S., Best approximation theorems for almost cyclic contractions. J. Fixed Point Theory Appl. 23 (2021), MR4254305] and [Eldred, A. Anthony; Veeramani, P., Existence and convergence of best proximity points. J. Math. Anal. Appl. 323 (2006), MR2260159]. Moreover, we study the existence of best approximations for almost cyclic contractions in a reflexive Banach space.
 \end{abstract}

\maketitle



\section{Introduction and Preliminaries}

An important problem in nonlinear functional analysis and approximation theory is the question of the existence of a best approximation for a nonself map on a given subset of a normed linear space. In other words, if $\mathcal{U}$ is a non-empty subset of a normed linear space $\mathcal{X}$ and $h: \mathcal{U}\to \mathcal{X}$ is a map, then does there exist an element $u\in \mathcal{U}$ such that $\sigma(u,hu)=\sigma(hu, \mathcal{U}):=\inf\{\sigma(hu, z): z\in \mathcal{U}\}?$ Ky Fan's  theorem answers this question affirmatively provided $\mathcal{U}$ is compact convex and $h$ is continuous (\cite{Fan1969}).
 However, lack of sufficient compact sets in metric spaces (in particular, infinite dimensional Banach spaces) makes the existence problem non-trivial.  
 Let $\mathcal{X}^*$ denote the continuous dual of the normed linear space $\mathcal{X}.$ The space $\mathcal{X}$ has the Kadec-Klee property (\cite{book:Megginson}) provided the weak topology and the norm topology coincide on the unit sphere $S_\mathcal{X}=\{x\in \mathcal{X}: \|x\|=1\}.$ If $\mathcal{X}$ is a Banach space, then $\mathcal{X}$ is said to be reflexive if $\mathcal{X}=\mathcal{X}^{**}.$ The normed linear space $\mathcal{X}$ is said to be strictly convex if $S_\mathcal{X}$ does not contain any non-trivial line segment.
The normed linear space $\mathcal{X}$ is said to be uniformly convex (\cite{book:Megginson}) if whenever $\{u_n\}$ and $\{v_n\}$ are sequences in $\mathcal{X}$ such that $\{\|u_n\|\}, \{\|v_n\|\}$ and $\left\{\left\|\frac{u_n+v_n}{2}\right\|\right\}$ converges to $1,$ then $\{\|u_n-v_n\|\}$ converges to $0.$
 
Recently in 2021, S. Basha (\cite{Basha2021}) established the existence of a best approximation (or best approximant) for an almost cyclic contraction map in a uniformly convex Banach space setting. 
  Let $\mathcal{G}$ and $\mathcal{H}$ be two non-empty subsets of a metric space $(\mathcal{E}, \sigma).$ 
 Denote $\{u\in \mathcal{G}: \sigma(u,v)=\sigma(\mathcal{G},\mathcal{H})~\mbox{for some}~v\in \mathcal{H}\}$ and $\{w\in \mathcal{H}: \sigma(z,w)=\sigma(\mathcal{G},\mathcal{H})~\mbox{for some}~z\in \mathcal{G}\}$ by $\mathcal{G}_0$ and $\mathcal{H}_0$ respectively.
The pair $(\mathcal{G},\mathcal{H})$ is proximal if $\mathcal{G}=\mathcal{G}_0$ and $\mathcal{H}=\mathcal{H}_0.$ 
The pair $(\mathcal{G},\mathcal{H})$ is semi-sharp proximal if given an $u$ in $\mathcal{G}$ (resp. in $\mathcal{H}$), there exists at most one $v$ in $\mathcal{H}$ (resp. in $\mathcal{G}$) such that $\sigma(u,v)=\sigma(\mathcal{G},\mathcal{H}).$ The pair $(\mathcal{G},\mathcal{H})$ is sharp proximal if for $u$ in $\mathcal{G}$ (resp. in $\mathcal{H}$), there exists a unique $v$ in $\mathcal{H}$ (resp. in $\mathcal{G}$) such that $\sigma(u,v)=\sigma(\mathcal{G},\mathcal{H}).$
 In this case we write $v=u'.$ We say $(\mathcal{G}, \mathcal{H})$ satisfies some property if $\mathcal{G}$ and $\mathcal{H}$ both have that property.  
 If $(\mathcal{G}, \mathcal{H})$ is a weakly compact convex pair of a Banach space, then $\mathcal{G}_0\neq \emptyset, \mathcal{H}_0\neq \emptyset.$ 
A map $T$ on $\mathcal{G}\cup \mathcal{H}$ is cyclic if $T(\mathcal{G})\subseteq \mathcal{H},~T(\mathcal{H})\subseteq \mathcal{G}$ and a point $u\in \mathcal{G}$ (resp., $u\in \mathcal{H}$) is called a best approximation for $T$ in $\mathcal{G}$ (resp., in $\mathcal{H}$) if $\sigma(u,Tu)=\sigma(Tu, \mathcal{G})$ (resp., $\sigma(u,Tu)=\sigma(Tu, \mathcal{H})$). A huge supply of best approximations for the map $T$ is the best proximity points.  
A point $u\in \mathcal{G}\cup \mathcal{H}$ that satisfies the equation $\sigma(u,Tu)=\sigma(\mathcal{G},\mathcal{H})$ is called a best proximity point for $T$ (\cite{Eldred2006}).
 Here $\sigma(\mathcal{G},\mathcal{H})=\inf\{\|u-v\|: u\in \mathcal{G}, v\in \mathcal{H}\}.$ It is needless to mention that a best proximity point becomes a  fixed point (a point $z\in \mathcal{G}\cup \mathcal{H}$ is a fixed point of $T$ if $z=Tz$) if $\mathcal{G}\cap \mathcal{H}\neq \emptyset.$ 
 Numerous best proximity point theorems for variants of cyclic maps have been one of the main research topics in nonlinear analysis since last two decades. For some references reader may see \cite{Shahzad2009, Abhik2020, Eldred2005, Eldred2006, Espinola2008, EspinolaAurora2011, 
 Aurora2010, Moosa2018, Reich2003, Rajesh2016, Raju2011, Suzuki2009}. The first seminal results in this direction is due to Eldred {\it et. al.} (\cite{Eldred2005,Eldred2006}). 
\begin{definition}
Let $(\mathcal{G}, \mathcal{H})$ be a non-void pair in a metric space and $T$ is a cyclic map on $\mathcal{G}\cup \mathcal{H}$.  Then $T$ is called a cyclic contraction if there exists an $\beta\in [0,1)$ satisfying $\sigma(Tu, Tv)\leq \beta \sigma(u,v)+(1-\beta)\sigma(\mathcal{G},\mathcal{H}).$ 
\end{definition}  
 
 The main theorem of \cite{Eldred2006} is the following: 
\begin{theorem}\label{Thm:Eldred2006}
Let $\mathcal{G}$ and $\mathcal{H}$ be two non-empty closed convex subsets of a uniformly convex Banach space and $T$ a cyclic contraction map on $\mathcal{G}\cup \mathcal{H}.$ Then $T$ has a unique best proximity point. 
\end{theorem}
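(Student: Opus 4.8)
The plan is to build a best proximity point by Picard iteration and to control the iterates through the geometry of uniform convexity. Write $d:=\sigma(\mathcal{G},\mathcal{H})$ and fix $\beta\in[0,1)$ witnessing the cyclic contraction. Before touching the iteration I would record two auxiliary facts encoding uniform convexity, both proved by a routine argument with the modulus of convexity: (UC1) if $\{u_n\},\{w_n\}$ lie in one of the convex sets and $\{v_n\}$ in the other with $\sigma(u_n,v_n)\to d$ and $\sigma(w_n,v_n)\to d$, then $\sigma(u_n,w_n)\to 0$; and (UC2) if $\{u_m\},\{w_n\}$ lie in one convex set and $\{v_n\}$ in the other with $\sigma(w_n,v_n)\to d$ and, for every $\varepsilon>0$, $\sigma(u_m,v_n)\le d+\varepsilon$ for all $m>n$ large, then $\sigma(u_m,w_n)\to 0$ as $m,n\to\infty$. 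These are exactly the ``property UC'' statements the paper later abstracts, and both convex sets qualify as ``the one convex set'' since $\mathcal{G},\mathcal{H}$ are convex.

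Fix $x_0\in\mathcal{G}$ and set $x_{n+1}=Tx_n$, so even iterates lie in $\mathcal{G}$ and odd ones in $\mathcal{H}$. First I would show $\sigma(x_n,x_{n+1})\to d$: the contraction gives $\sigma(x_{n+1},x_{n+2})\le \beta\,\sigma(x_n,x_{n+1})+(1-\beta)d$, whence by induction $\sigma(x_n,x_{n+1})\le \beta^n\sigma(x_0,x_1)+(1-\beta^n)d\to d$, while $\sigma(x_n,x_{n+1})\ge d$ automatically. Next I would prove the orbit is bounded: from $\sigma(x_{2n},x_1)\le \sigma(x_{2n},x_{2n+1})+\sigma(x_{2n+1},x_1)$ together with $\sigma(x_{2n+1},x_1)=\sigma(Tx_{2n},Tx_0)\le \beta\sigma(x_{2n},x_0)+(1-\beta)d$ and $\sigma(x_{2n},x_0)\le \sigma(x_{2n},x_1)+\sigma(x_1,x_0)$, solving for $\sigma(x_{2n},x_1)$ using $1-\beta>0$ gives a uniform bound on $\{x_{2n}\}$, and likewise on $\{x_{2n+1}\}$. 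The crucial estimate then follows by iterating the contraction inequality $2n+1$ times, reducing both indices until the second reaches $0$: $\sigma(x_{2m},x_{2n+1})\le \beta^{2n+1}\sigma(x_{2m-2n-1},x_0)+(1-\beta^{2n+1})d$. Since $x_{2m-2n-1}$ is an odd iterate and the orbit is bounded, $\sigma(x_{2m-2n-1},x_0)\le K$ uniformly, so $\sigma(x_{2m},x_{2n+1})\le d+\beta^{2n+1}(K-d)$, which drops below $d+\varepsilon$ for all $m>n$ once $n$ is large.

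With these estimates in hand, I would apply (UC2) with the odd iterates in $\mathcal{H}$ as $\{v_n\}$ and the even iterates in $\mathcal{G}$ as $\{u_m\},\{w_n\}$ to conclude $\sigma(x_{2m},x_{2n})\to 0$, i.e.\ $\{x_{2n}\}$ is Cauchy; symmetrically $\{x_{2n+1}\}$ is Cauchy. By completeness and closedness, $x_{2n}\to u\in\mathcal{G}$ and $x_{2n+1}\to v\in\mathcal{H}$, and $\sigma(x_{2n},x_{2n+1})\to d$ forces $\sigma(u,v)=d$. To see $u$ is a best proximity point I would estimate the \emph{cross} pair $\sigma(x_{2n},Tu)=\sigma(Tx_{2n-1},Tu)\le \beta\,\sigma(x_{2n-1},u)+(1-\beta)d\to \beta\,\sigma(v,u)+(1-\beta)d=d$, while $\sigma(x_{2n},Tu)\ge d$; hence $\sigma(x_{2n},Tu)\to d$, and since $x_{2n}\to u$ we get $\sigma(u,Tu)=d=\sigma(\mathcal{G},\mathcal{H})$.

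For uniqueness, first note every best proximity point $x\in\mathcal{G}$ satisfies $T^2x=x$: the contraction on the cross pair $(x,Tx)$ gives $\sigma(Tx,T^2x)\le \beta\sigma(x,Tx)+(1-\beta)d=d$, so $\sigma(T^2x,Tx)=d=\sigma(x,Tx)$, and (UC1) with constant sequences yields $x=T^2x$. If $x,x^\ast\in\mathcal{G}$ are best proximity points, applying the contraction to the cross pairs $(x,Tx^\ast)$ and $(x^\ast,Tx)$, simplifying via $T^2x=x$ and $T^2x^\ast=x^\ast$, and adding the two inequalities gives $\sigma(x,Tx^\ast)+\sigma(x^\ast,Tx)\le 2d$; combined with $\sigma(x,Tx^\ast),\sigma(x^\ast,Tx)\ge d$ this forces both to equal $d$, so $Tx,Tx^\ast$ are points of $\mathcal{H}$ at distance $d$ from $x$, whence (UC1) gives $Tx=Tx^\ast$ and $x=T^2x=T^2x^\ast=x^\ast$. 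I expect the main obstacle to be the second step: simultaneously securing boundedness of the orbit and the uniform estimate $\sigma(x_{2m},x_{2n+1})\le d+\varepsilon$, since it is precisely here that the contraction constant and the geometry must be balanced, and it is the input without which the uniform-convexity lemmas cannot deliver the Cauchy property.
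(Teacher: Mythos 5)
Your proof is correct, but it is not the route the paper takes. The paper obtains this theorem in one line as a special case of its Corollary \ref{cor:cyclic}: a cyclic contraction is an almost cyclic $\psi$-contraction with $\psi(s)=(1-\beta)s$ (Example \ref{ex1}), bounded convex subsets of a uniformly convex space have property strongly UC, and the general Theorem \ref{maintheorem} (built on Lemma \ref{Cauchy} and Lemma \ref{Approximationlemma}) then yields a unique best approximation, which for a cyclic contraction is forced to be a best proximity point. You instead give a direct, self-contained argument in the spirit of the original Eldred--Veeramani paper: your (UC1) and (UC2) are exactly the two uniform-convexity lemmas that the paper's ``property UC/strongly UC'' axiomatizes, and your orbit-boundedness plus the iterated estimate $\sigma(x_{2m},x_{2n+1})\le d+\beta^{2n+1}(K-d)$ replaces the paper's Lemma \ref{Cauchy} and the $\epsilon_0$-contradiction argument in Theorem \ref{maintheorem}. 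What your approach buys is elementariness and independence from the boundedness hypothesis: the paper's proposition establishing property strongly UC in uniformly convex spaces assumes the sets are bounded, whereas you prove boundedness of the orbit directly, so your argument covers the unbounded closed convex case exactly as stated. What the paper's approach buys is generality: the same machinery simultaneously handles almost cyclic $\psi$-contractions in arbitrary metric spaces with property strongly UC, where the relevant target is the variable quantity $\sigma(v_n,\mathcal{G})$ rather than the constant $\sigma(\mathcal{G},\mathcal{H})$, and for that the weaker lemmas (UC1)/(UC2) would not suffice. All the individual steps of your argument (the geometric lemmas, the cross estimate, the identity $T^2x=x$ for best proximity points, and the summed-inequality uniqueness argument) check out.
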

The class of almost cyclic contraction was introduced in \cite{Basha2021}.
\begin{definition}\label{Def:AlmostCyclic}
Let $\mathcal{G}$ and $\mathcal{H}$ be two nonempty subsets of a metric space and $T$ a cyclic map on $\mathcal{G}\cup \mathcal{H}.$ Then $T$ is said to be an almost cyclic contraction on $\mathcal{G}\cup \mathcal{H}$ if for $u\in \mathcal{G},~v\in \mathcal{H},$ there exists $\beta \in [0,1)$ such that $\sigma(Tu,Tv)\leq \beta \sigma(u,v)+(1-\beta)\sigma(v,\mathcal{G}).$
\end{definition}
The main best approximation theorem in \cite{Basha2021} is the following.
\begin{theorem}\label{Thm:Basha2021}
Suppose that $\mathcal{G}$ and $\mathcal{H}$ are two non-empty subsets of a uniformly convex Banach space with $\mathcal{G}$ is closed convex and $\mathcal{H}$ is convex. Then every almost cyclic contraction on $\mathcal{G}\cup \mathcal{H}$ has a best approximation.
\end{theorem}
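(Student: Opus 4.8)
The plan is to fix $x_0 \in \mathcal{G}$ and iterate $x_{n+1} = Tx_n$, so that $x_{2n} \in \mathcal{G}$ and $x_{2n+1} \in \mathcal{H}$ for every $n$, and then to show that the even iterates converge to a best approximation in $\mathcal{G}$. Writing $a_n = \sigma(x_n, x_{n+1})$, I would first apply the almost cyclic contraction inequality to the consecutive pairs $(x_{2k}, x_{2k+1})$ and $(x_{2k+2}, x_{2k+1})$; since $x_{2k}, x_{2k+2} \in \mathcal{G}$ we have $\sigma(x_{2k+1}, \mathcal{G}) \le a_{2k}$ and $\sigma(x_{2k+1}, \mathcal{G}) \le a_{2k+1}$, which forces $a_{2k+1} \le a_{2k}$ and $a_{2k+2} \le a_{2k+1}$. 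Thus $\{a_n\}$ is non-increasing and bounded below by $\sigma(\mathcal{G}, \mathcal{H})$, so $a_n \downarrow L$ for some $L \ge \sigma(\mathcal{G},\mathcal{H})$. Feeding this back into $a_{2n+1} \le \beta a_{2n} + (1-\beta)\sigma(x_{2n+1}, \mathcal{G})$ and rearranging gives $\sigma(x_{2n+1}, \mathcal{G}) \ge (a_{2n+1} - \beta a_{2n})/(1-\beta) \to L$, while $\sigma(x_{2n+1}, \mathcal{G}) \le a_{2n+1} \to L$; hence $\sigma(x_{2n+1}, \mathcal{G}) \to L$ as well.

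Next I would prove that $\{x_{2n}\}$ is Cauchy, which is where uniform convexity (property strongly UC) enters and which I expect to be the main obstacle. The elementary uniform convexity lemma I would use is: if $\{s_n\}, \{t_n\} \subseteq \mathcal{G}$ and $\{r_n\} \subseteq \mathcal{H}$ satisfy $\sigma(s_n, r_n) \to L$, $\sigma(t_n, r_n) \to L$ and $\sigma(r_n, \mathcal{G}) \to L$, then $\sigma(s_n, t_n) \to 0$; indeed the midpoint $\tfrac{s_n+t_n}{2}$ lies in $\mathcal{G}$, so $\bigl\|\tfrac{s_n+t_n}{2} - r_n\bigr\| \ge \sigma(r_n, \mathcal{G})$ forces the (normalised) vectors $s_n-r_n$ and $t_n-r_n$ to have midpoint norm tending to $L$, whence uniform convexity applies. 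Taking $r_n = x_{2n+1}$, $s_n = x_{2n}$, $t_n = x_{2n+2}$ already yields $\sigma(x_{2n}, x_{2n+2}) \to 0$, but for the full Cauchy property I must upgrade this to a bound uniform in $m > n$. To that end I would run a standard telescoping/induction on the contraction inequality $\sigma(x_{2n+1}, x_{2m}) \le \beta\,\sigma(x_{2n}, x_{2m-1}) + (1-\beta)\sigma(x_{2m-1}, \mathcal{G})$ to show that for every $\varepsilon > 0$ there is $N$ with $\sigma(x_{2m}, x_{2n+1}) \le L + \varepsilon$ for all $m > n \ge N$; combined with $\sigma(x_{2n}, x_{2n+1}) \to L$ and $\sigma(x_{2n+1}, \mathcal{G}) \to L$, the uniform version of the lemma then gives $\sigma(x_{2m}, x_{2n}) \to 0$ as $m, n \to \infty$. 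As $\mathcal{G}$ is closed, hence complete, $x_{2n} \to u$ for some $u \in \mathcal{G}$.

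Finally I would identify $u$ as a best approximation by proving $\sigma(u, Tu) = \sigma(Tu, \mathcal{G}) = L$, noting that $Tu \in \mathcal{H}$ and $T^2u \in \mathcal{G}$. Since $a_{2n} \to L$ and $x_{2n} \to u$ give $\sigma(x_{2n+1}, u) \to L$, and $\sigma(x_{2n+1}, \mathcal{G}) \to L$, applying the contraction to the pair $(u, x_{2n+1})$ gives $\sigma(Tu, x_{2n+2}) \le \beta\,\sigma(u, x_{2n+1}) + (1-\beta)\sigma(x_{2n+1}, \mathcal{G}) \to L$; letting $n \to \infty$ (with $x_{2n+2} \to u$) yields $\sigma(Tu, u) \le L$. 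For the reverse inequality I would use that $\sigma(x_{2n+1}, z) \ge \sigma(x_{2n+1}, \mathcal{G}) \to L$ for every $z \in \mathcal{G}$: applying the contraction to the pair $(x_{2n}, Tu)$ gives $\sigma(x_{2n+1}, T^2u) \le \beta\,\sigma(x_{2n}, Tu) + (1-\beta)\sigma(Tu, \mathcal{G})$, whose left side has $\liminf \ge L$ (taking $z = T^2u$), while its right side tends to $\beta\,\sigma(u, Tu) + (1-\beta)\sigma(Tu, \mathcal{G})$. Hence $L \le \beta\,\sigma(u, Tu) + (1-\beta)\sigma(Tu, \mathcal{G})$; inserting $\sigma(u, Tu) \le L$ and using $1-\beta > 0$ forces $\sigma(Tu, \mathcal{G}) \ge L$. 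Since always $\sigma(Tu, \mathcal{G}) \le \sigma(Tu, u) \le L$, all three quantities coincide, so $\sigma(u, Tu) = \sigma(Tu, \mathcal{G})$ and $u$ is a best approximation for $T$ in $\mathcal{G}$. The one genuinely technical point is the uniform estimate underpinning the Cauchy argument in the second paragraph; the rest is bookkeeping with the contraction inequality and continuity of $\sigma(\cdot, \mathcal{G})$.
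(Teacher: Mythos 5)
Your proposal is correct and follows essentially the same route as the paper, which obtains this theorem as a special case of its Theorem \ref{maintheorem}: your ``elementary uniform convexity lemma'' is precisely the paper's property strongly UC (established for convex sets in uniformly convex spaces), and your three stages --- $\sigma(x_{2n+1},\mathcal{G})$ and $\sigma(x_{2n},x_{2n+1})$ sharing a common limit $L$, the uniform estimate $\sigma(x_{2m},x_{2n+1})\le L+\varepsilon$ yielding Cauchyness, and the identification of the limit as a best approximation --- correspond to Lemma \ref{Approximationlemma}, assertion (\ref{assertion}) with Lemma \ref{Cauchy}, and Theorem \ref{cgtsub} respectively. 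The only cosmetic difference is that you obtain the uniform estimate by telescoping the linear contraction inequality (which works here since $\psi(s)=(1-\beta)s$, granted the standard boundedness of the orbit), whereas the paper argues by contradiction to cover general $\psi$.
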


 A cyclic map $T$ on $\mathcal{G}\cup \mathcal{H}$ is relatively nonexpansive if $\sigma(Tu,Tv)\leq \sigma(u,v)$ for all $u\in \mathcal{G}$ and $v\in \mathcal{G}$ (\cite{Eldred2005}). It is easy to see that a cyclic contraction is an almost cyclic contraction and an almost cyclic contraction is relatively nonexpansive.
  
The paper is organized as follows. Section \ref{Property SUC} deals with property strongly UC and its properties. In Section \ref{Section3}, we introduce the class of almost cyclic $\psi$-contractions and obtain the existence and uniqueness of best approximations using property strongly UC in a metric space setting. As a consequence, we obtain the main results of \cite{Basha2021} and \cite{Eldred2006}. In Section \ref{Section4}, The existence of best approximations of an almost cyclic contraction in a reflexive Banach space is discussed. In \cite{Basha2021}, the author stated that an almost cyclic contraction is not necessarily continuous. However, we provide sufficient conditions on the domain of a relatively nonexpansive map and discuss its continuity.

\section{Property strongly UC}\label{Property SUC} 
A new geometric notion, property strongly UC is introduced and some properties of this notion are discussed in this section. Let $\mathcal{G}$ and $\mathcal{H}$ be two non-void subsets of a metric space $(\mathcal{E},\sigma).$ It is said that $(\mathcal{G},\mathcal{H})$ has property UC, if whenever $\{u_n\}$ and $\{z_n\}$ are two sequences in $\mathcal{G}$ and $\{v_n\}$ is a sequence in $\mathcal{H}$ with $\sigma(u_n, v_n)\to \sigma(\mathcal{G},\mathcal{H})$ and $\sigma(z_n, v_n)\to \sigma(\mathcal{G},\mathcal{H}),$ then it is the case that $\sigma(u_n, z_n)\to 0.$ For futher study on property UC, one may see \cite{EspinolaAurora2011, Aurora2010, Rajesh2016}.

\begin{definition}\label{strongly UC}
Let $\mathcal{G}$ and $\mathcal{H}$ be two non-empty subsets of a metric space. We say that $(\mathcal{G},\mathcal{H})$ has property strongly UC if $\{u_n\},~\{z_n\}$ are two sequences in $\mathcal{G}$ and $\{v_n\}$ is a sequence in $\mathcal{H}$ satisfying $\sigma(u_n, v_n)-\sigma(v_n, \mathcal{G})\to 0$ and $\sigma(z_n, v_n)-\sigma(v_n,\mathcal{G})\to 0,$ then $\sigma(u_n, z_n)\to 0.$
\end{definition}
We must see that property strongly UC is readily stronger than property UC.
 The following proposition presents that property UC implies property strongly UC under the proximality condition.
\begin{proposition}\label{UCtoSUC}
Let $(\mathcal{G},\mathcal{H})$ be a nonempty proximal pair in a metric space such that $(\mathcal{G},\mathcal{H})$ has property UC. Then $(\mathcal{G},\mathcal{H})$ has property strongly UC.
\end{proposition}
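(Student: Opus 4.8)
The plan is to exploit the fact that proximality forces the point-to-set distance $v \mapsto \sigma(v,\mathcal{G})$ to be constant on $\mathcal{H}$, thereby collapsing the hypotheses of property strongly UC exactly onto those of property UC, after which the conclusion is handed to us for free.

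The key step I would carry out first is the identity $\sigma(v,\mathcal{G}) = \sigma(\mathcal{G},\mathcal{H})$ for every $v \in \mathcal{H}$. The inequality $\sigma(v,\mathcal{G}) \geq \sigma(\mathcal{G},\mathcal{H})$ is immediate, since for any $w \in \mathcal{G}$ the pair $(w,v)$ contributes to the infimum defining $\sigma(\mathcal{G},\mathcal{H})$, forcing $\sigma(v,w) \geq \sigma(\mathcal{G},\mathcal{H})$ and hence the same bound on the infimum over $w$. For the reverse inequality I would invoke proximality: because $\mathcal{H} = \mathcal{H}_0$, the point $v$ admits some $z \in \mathcal{G}$ with $\sigma(z,v) = \sigma(\mathcal{G},\mathcal{H})$, and therefore $\sigma(v,\mathcal{G}) \leq \sigma(v,z) = \sigma(\mathcal{G},\mathcal{H})$. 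Combining the two gives the claimed equality.

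With this identity in hand, I would take arbitrary sequences $\{u_n\}, \{z_n\}$ in $\mathcal{G}$ and $\{v_n\}$ in $\mathcal{H}$ satisfying the strongly UC hypotheses $\sigma(u_n,v_n) - \sigma(v_n,\mathcal{G}) \to 0$ and $\sigma(z_n,v_n) - \sigma(v_n,\mathcal{G}) \to 0$. Substituting $\sigma(v_n,\mathcal{G}) = \sigma(\mathcal{G},\mathcal{H})$ for every $n$, these reduce to $\sigma(u_n,v_n) \to \sigma(\mathcal{G},\mathcal{H})$ and $\sigma(z_n,v_n) \to \sigma(\mathcal{G},\mathcal{H})$, which are precisely the hypotheses appearing in the definition of property UC. Applying the assumed property UC then yields $\sigma(u_n,z_n) \to 0$, which is the desired conclusion.

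The argument is short, and the only genuine content is the constancy identity; once it is observed, the implication is essentially a tautology. I do not anticipate a serious obstacle here. The one point that deserves care is the upper bound in the identity, as this is the sole place where proximality of the pair—rather than mere nonemptiness of $\mathcal{H}_0$—is actually used; it is exactly the equality $\mathcal{H} = \mathcal{H}_0$ that guarantees a partner $z$ realizing the distance $\sigma(\mathcal{G},\mathcal{H})$ for every $v \in \mathcal{H}$.
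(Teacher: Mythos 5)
Your proof is correct and follows essentially the same route as the paper: the paper likewise uses proximality to pick, for each $v_n$, a point $w_n\in\mathcal{G}$ with $\sigma(w_n,v_n)=\sigma(\mathcal{G},\mathcal{H})$ and thereby reduces the strongly UC hypotheses to the UC hypotheses; you merely package this as the explicit identity $\sigma(v,\mathcal{G})=\sigma(\mathcal{G},\mathcal{H})$ on $\mathcal{H}$, which is a slightly cleaner presentation of the same idea.
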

\begin{proof}
Suppose $\{u_n\}, \{z_n\}$ in $\mathcal{G}$ and $\{v_n\}$ in $\mathcal{H}$ are sequences such that $\sigma(u_n,v_n)-\sigma(v_n, \mathcal{G})\to 0$ and $\sigma(z_n,v_n)-\sigma(v_n, \mathcal{G})\to 0.$ Since $(\mathcal{G},\mathcal{H})$ is proximal, 
 there exists $w_n\in \mathcal{G}$ such that $\sigma(w_n,v_n)=\sigma(\mathcal{G},\mathcal{H})$ for $n\geq 1.$ Then
\begin{eqnarray*}
\sigma(u_n,v_n)-\sigma(\mathcal{G},\mathcal{H})&=&\sigma(u_n,v_n)-\sigma(w_n,v_n)\leq \sigma(u_n,v_n)-\sigma(v_n, \mathcal{G})\to 0\\
\sigma(z_n,v_n)-\sigma(\mathcal{G},\mathcal{H})&=&\sigma(z_n,v_n)-\sigma(w_n,v_n)\leq \sigma(z_n,v_n)-\sigma(v_n, \mathcal{G})\to 0.
\end{eqnarray*}
By property UC, we have $\sigma(u_n,z_n)\to 0.$
\end{proof}
For two non-empty closed convex subsets $\mathcal{G}$ and $\mathcal{H}$ of a reflexive Banach space with $\mathcal{G}$ bounded, we have $\mathcal{G}_0\neq \emptyset,~\mathcal{H}_0\neq \emptyset$ (\cite{Reich2003}). Thus as a consequence of Proposition \ref{UCtoSUC}, we have
\begin{corollary}
Suppose $\mathcal{G}$ and $\mathcal{H}$ are two non-empty bounded closed and convex subsets of a reflexive Banach space $\mathcal{E}.$ Then the following are equivalent:\\
\noindent (i) $(\mathcal{G}_0, \mathcal{H}_0)$ has property UC.\\
\noindent (ii) $(\mathcal{G}_0, \mathcal{H}_0)$ has property strongly UC.
\end{corollary}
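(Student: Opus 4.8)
The plan is to reduce both implications to results already in hand, so that the only genuine work is a verification of proximality. The implication (ii) $\Rightarrow$ (i) requires nothing new: it is exactly the remark recorded immediately after Definition \ref{strongly UC}, that property strongly UC is stronger than property UC, applied to the pair $(\mathcal{G}_0, \mathcal{H}_0)$. Concretely, if $\{u_n\},\{z_n\}$ lie in $\mathcal{G}_0$ and $\{v_n\}$ in $\mathcal{H}_0$ with $\sigma(u_n,v_n)\to \sigma(\mathcal{G}_0,\mathcal{H}_0)$ and $\sigma(z_n,v_n)\to \sigma(\mathcal{G}_0,\mathcal{H}_0)$, then the chain $\sigma(\mathcal{G}_0,\mathcal{H}_0)\le \sigma(v_n,\mathcal{G}_0)\le \sigma(u_n,v_n)$ squeezes $\sigma(v_n,\mathcal{G}_0)$ to $\sigma(\mathcal{G}_0,\mathcal{H}_0)$, so that $\sigma(u_n,v_n)-\sigma(v_n,\mathcal{G}_0)\to 0$ and likewise for $\{z_n\}$; property strongly UC then yields $\sigma(u_n,z_n)\to 0$, which is property UC.

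For the substantive direction (i) $\Rightarrow$ (ii) I would invoke Proposition \ref{UCtoSUC} with $(\mathcal{G}_0,\mathcal{H}_0)$ playing the role of $(\mathcal{G},\mathcal{H})$. Two hypotheses of that proposition must be secured. First, $(\mathcal{G}_0,\mathcal{H}_0)$ is nonempty: since $\mathcal{G},\mathcal{H}$ are bounded, closed and convex in a reflexive Banach space, the result of Reich cited just above the corollary gives $\mathcal{G}_0\neq\emptyset$ and $\mathcal{H}_0\neq\emptyset$. Second, the standing hypothesis (i) is precisely property UC for $(\mathcal{G}_0,\mathcal{H}_0)$.

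The heart of the matter, and the step I expect to be the main obstacle, is to verify that $(\mathcal{G}_0,\mathcal{H}_0)$ is a proximal pair. I would first establish $\sigma(\mathcal{G}_0,\mathcal{H}_0)=\sigma(\mathcal{G},\mathcal{H})$: the inequality $\sigma(\mathcal{G}_0,\mathcal{H}_0)\ge \sigma(\mathcal{G},\mathcal{H})$ is automatic from $\mathcal{G}_0\subseteq\mathcal{G}$ and $\mathcal{H}_0\subseteq\mathcal{H}$, while for the reverse one takes any $u\in\mathcal{G}_0$ and, by definition of $\mathcal{G}_0$, a point $v\in\mathcal{H}$ with $\sigma(u,v)=\sigma(\mathcal{G},\mathcal{H})$; this same $v$ attains the minimal distance with the point $u\in\mathcal{G}$, so $v\in\mathcal{H}_0$ and hence $\sigma(\mathcal{G}_0,\mathcal{H}_0)\le\sigma(u,v)=\sigma(\mathcal{G},\mathcal{H})$. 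The identical observation yields proximality: for each $u\in\mathcal{G}_0$ the partner $v$ lies in $\mathcal{H}_0$ with $\sigma(u,v)=\sigma(\mathcal{G}_0,\mathcal{H}_0)$, so $u\in(\mathcal{G}_0)_0$, giving $(\mathcal{G}_0)_0=\mathcal{G}_0$, and a symmetric argument gives $(\mathcal{H}_0)_0=\mathcal{H}_0$. With nonemptiness, proximality and property UC all in place, Proposition \ref{UCtoSUC} delivers property strongly UC for $(\mathcal{G}_0,\mathcal{H}_0)$, which closes the equivalence.
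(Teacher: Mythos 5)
Your proposal is correct and follows the same route the paper intends: the paper derives this corollary directly from Proposition \ref{UCtoSUC} together with the cited fact that $\mathcal{G}_0$ and $\mathcal{H}_0$ are non-empty for bounded closed convex sets in a reflexive space, leaving the proximality of $(\mathcal{G}_0,\mathcal{H}_0)$ and the easy implication (ii) $\Rightarrow$ (i) implicit. You simply supply those omitted verifications, and both are carried out correctly.
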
  
The following example illuminates that property UC is in general weaker than property strongly UC. 
\begin{example}\label{UC but not SUC}
Consider the metric space $\mathbb{R}^2$ with the supremum norm and $\mathcal{G}=\{(0,y): 0\leq y\leq 1\},~\mathcal{H}=\{(x,0): 0\leq x\leq 1\}.$ Let $u_n=\left(0, \frac{1}{2}+\frac{1}{n}\right)$ for $n\geq 2,~u_1=(0,0),~ z_n=\left(0, 1-\frac{1}{n}\right)$ and $v_n=\left(1, 0\right)$ for $n\geq 1.$ 
 We see that $\|u_n-v_n\|-\sigma(v_n,\mathcal{G})\to 0$ and $\|z_n-v_n\|-\sigma(v_n,\mathcal{G})\to 0.$ But $\|u_n-z_n\|\to \frac{1}{2}.$ Therefore, $(\mathcal{G},\mathcal{H})$ does not have property strongly UC. Notice that $(\mathcal{G},\mathcal{H})$ has property UC.
\end{example} 
\begin{proposition}
Let $\mathcal{G}$ and $\mathcal{H}$ be two nonempty subsets of a strictly convex Banach space $\mathcal{E}.$ Assume that $\mathcal{E}$ has the Kadec- Klee property, $\mathcal{G}$ is weakly compact and convex, and $\mathcal{H}$ is compact. Then $(\mathcal{G},\mathcal{H})$ has property strongly UC.
\end{proposition}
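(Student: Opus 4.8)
The plan is to argue by contradiction, using the two compactness hypotheses to extract convergent subsequences and then invoking strict convexity together with the Kadec--Klee property to force the sequences $\{u_n\}$ and $\{z_n\}$ in $\mathcal{G}$ to have a common strong limit. Suppose the conclusion fails, so that there exist $\varepsilon>0$ and a subsequence (which I will not relabel) with $\|u_n-z_n\|\geq\varepsilon$ for all $n$. First I would extract convergent subsequences: since $\mathcal{H}$ is compact, pass to a subsequence along which $v_n\to v\in\mathcal{H}$ in norm; since $\mathcal{G}$ is weakly compact, the Eberlein--Smulian theorem permits a further passage so that $u_n\rightharpoonup u$ and $z_n\rightharpoonup z$ with $u,z\in\mathcal{G}$. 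Writing $d:=\sigma(v,\mathcal{G})$ and using that $x\mapsto\sigma(x,\mathcal{G})$ is $1$-Lipschitz, I get $\sigma(v_n,\mathcal{G})\to d$, so the two hypotheses yield $\|u_n-v_n\|\to d$ and $\|z_n-v_n\|\to d$.

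Next I would identify the weak limits. Because $v_n\to v$ strongly and $u_n\rightharpoonup u$ weakly, we have $u_n-v_n\rightharpoonup u-v$, and weak lower semicontinuity of the norm gives $\|u-v\|\leq d$; since $u\in\mathcal{G}$ forces $\|u-v\|\geq d$, it follows that $\|u-v\|=d$, and likewise $\|z-v\|=d$. The midpoint $\tfrac{u+z}{2}$ lies in $\mathcal{G}$ by convexity, so $\bigl\|\tfrac{u+z}{2}-v\bigr\|\geq d$, while the triangle inequality gives $\bigl\|\tfrac{u+z}{2}-v\bigr\|\leq d$; hence $\bigl\|\tfrac{(u-v)+(z-v)}{2}\bigr\|=\tfrac12\bigl(\|u-v\|+\|z-v\|\bigr)=d$. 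If $d=0$ then $u=v=z$; if $d>0$, normalizing $u-v$ and $z-v$ by $d$ puts both on the unit sphere with their midpoint also on the sphere, so strict convexity yields $u-v=z-v$. In either case $u=z$.

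Finally I would upgrade the weak convergence to strong convergence, which is the step I expect to be the crux. Put $w_n:=u_n-v_n\rightharpoonup u-v=:w$, with $\|w_n\|\to\|w\|=d$. Here I invoke the Kadec--Klee property: when $d>0$, the normalized vectors $w_n/\|w_n\|$ converge weakly to $w/\|w\|$ on the unit sphere, where the weak and norm topologies coincide, so $w_n/\|w_n\|\to w/\|w\|$ in norm and therefore $w_n\to w$; when $d=0$ the conclusion is immediate. Thus $u_n-v_n\to u-v$, and since $v_n\to v$ strongly we obtain $u_n\to u$ in norm; the same reasoning gives $z_n\to z$. Combined with $u=z$, this forces $\|u_n-z_n\|\to\|u-z\|=0$, contradicting $\|u_n-z_n\|\geq\varepsilon$. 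The two delicate points are the correct application of Eberlein--Smulian to secure weakly convergent subsequences with limits inside $\mathcal{G}$, and the passage from $\|w_n\|\to\|w\|$ together with weak convergence to genuine norm convergence through the unit-sphere formulation of the Kadec--Klee property; the essential role of the compactness of $\mathcal{H}$ (rather than mere weak compactness) is that it delivers the strong convergence $v_n\to v$ needed in this last step.
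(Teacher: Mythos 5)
Your proof is correct and follows essentially the same route as the paper's: extract $v_n\to v$ strongly by compactness of $\mathcal{H}$ and weakly convergent subsequences of $\{u_n\},\{z_n\}$ by weak compactness of $\mathcal{G}$, use weak lower semicontinuity to pin down $\|u-v\|=\|z-v\|=\sigma(v,\mathcal{G})$, apply Kadec--Klee to upgrade to norm convergence, and invoke strict convexity of the midpoint to force $u=z$. Your explicit contradiction setup and separate handling of the $d=0$ case are slightly tidier in presentation, but the argument is the same.
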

\begin{proof}
Let us consider the sequences $\{u_n\},~\{z_n\}$ in $\mathcal{G}$ and $\{v_n\}$ in $\mathcal{H}$ such that $\|u_n-v_n\|-\sigma(v_n, \mathcal{G})\to 0$ and $\|z_n-v_n\|-\sigma(v_n, \mathcal{G})\to 0.$ If for any subsequence $\{n_j\},$ $\sigma(v_{n_j}, \mathcal{G})\to 0,$ then by triangle inequality we have $\|u_{n_j}-z_{n_j}\|\to 0.$ 
Since $\mathcal{H}$ is weakly compact, it is bounded and hence is the sequence $\{\sigma(v_n, \mathcal{G})\}.$ Without loss of any generality, assume that $\sigma(v_n, \mathcal{G})\to P$ for some $P>0.$ Then $\|u_n-v_n\|\to P$ and $\|z_n-v_n\|\to P.$ There exists a subsequence $\{n_j\}$ satisfyingt $u_{n_j}\xrightarrow{w} u,~ z_{n_j}\xrightarrow{w} z$ and $v_{n_j}\rightarrow v.$ Then $P=\displaystyle\lim_{n \to \infty}\sigma(v_{n_j}, \mathcal{G})=\sigma(v, \mathcal{G}).$ 
By weakly lower semicontinuous of the norm, we have
\begin{eqnarray*}
P=\sigma(v, \mathcal{G})\leq \|u-v\|\leq \liminf_{j \to \infty} \|u_{n_j}- v_{n_j}\|\leq \limsup_{j \to \infty} \|u_{n_j}- v_{n_j}\| =P.
\end{eqnarray*}
Thus $ \|u-v\|=P.$ Similarly, $\|z-v\|=P.$ By the Kadec- Klee property, we have $u_{n_j}- v_{n_j} \to u-v$ and $z_{n_j}- v_{n_j} \to z-v.$ 
 If $u\neq z,$ by strict convexity, we get
\begin{eqnarray*}
\sigma(v, \mathcal{G})\leq \left\|v-\frac{u+z}{2}\right\|=\left\|\frac{u-v}{2}+\frac{z-v}{2}\right\|<P=\sigma(v, \mathcal{G}),~\mbox{which is absurd.}
\end{eqnarray*}
Thus $u=z$ and this completes the proof.
\end{proof}
A rich supply of pairs that possess property strongly UC comes from the collection of bounded and convex subsets of uniformly convex Banach spaces.
\begin{proposition}
Let $\mathcal{G}$ and $\mathcal{H}$ be two nonempty bounded subsets of a uniformly convex Banach space $\mathcal{E}$ with $\mathcal{G}$ convex. Then $(\mathcal{G},\mathcal{H})$ has property strongly UC. 
\end{proposition}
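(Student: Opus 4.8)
The plan is to argue by contradiction and reduce everything to the sequential form of uniform convexity recorded in the Introduction. I write $d_n := \sigma(v_n,\mathcal{G}) = \inf_{g \in \mathcal{G}}\|g - v_n\|$ and set $a_n := \|u_n - v_n\|$, $b_n := \|z_n - v_n\|$. Since $u_n, z_n \in \mathcal{G}$ we always have $a_n \ge d_n$ and $b_n \ge d_n$, while the hypotheses give $a_n - d_n \to 0$ and $b_n - d_n \to 0$. Because $\mathcal{G}$ and $\mathcal{H}$ are bounded, the three sequences $\{d_n\}, \{a_n\}, \{b_n\}$ are bounded.

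The one structural fact I will exploit is convexity of $\mathcal{G}$: the midpoint $\tfrac{u_n + z_n}{2}$ lies in $\mathcal{G}$, so
\[
\left\| \tfrac{u_n + z_n}{2} - v_n \right\| \ge d_n \quad \text{for every } n.
\]
Suppose, for contradiction, that $\|u_n - z_n\| \not\to 0$. Then there is $\varepsilon > 0$ and a subsequence, which I relabel as the full sequence, with $\|u_n - z_n\| \ge \varepsilon$ for all $n$. Passing to a further subsequence I may assume, using boundedness, that $d_n \to d$ for some $d \ge 0$; along it $a_n \to d$ and $b_n \to d$ as well.

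Now I split on $d$. If $d = 0$, then $a_n \to 0$ and $b_n \to 0$, whence $\|u_n - z_n\| \le a_n + b_n \to 0$, contradicting $\|u_n - z_n\| \ge \varepsilon$. If $d > 0$, then $d_n > 0$ eventually, and I normalize by setting $p_n := (u_n - v_n)/d_n$ and $q_n := (z_n - v_n)/d_n$. From $a_n/d_n \to 1$ and $b_n/d_n \to 1$ I obtain $\|p_n\| \to 1$ and $\|q_n\| \to 1$, while the displayed midpoint inequality gives $\|\tfrac{p_n+q_n}{2}\| = \tfrac{1}{d_n}\|\tfrac{u_n+z_n}{2} - v_n\| \ge 1$; combined with $\|\tfrac{p_n+q_n}{2}\| \le \tfrac12(\|p_n\| + \|q_n\|) \to 1$ this forces $\|\tfrac{p_n+q_n}{2}\| \to 1$. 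Uniform convexity in the sequential form then yields $\|p_n - q_n\| \to 0$, i.e. $\|u_n - z_n\| = d_n \|p_n - q_n\| \to 0$, again contradicting $\|u_n - z_n\| \ge \varepsilon$. As both cases are impossible, $\|u_n - z_n\| \to 0$ and $(\mathcal{G}, \mathcal{H})$ has property strongly UC.

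I expect the main obstacle to be the degenerate scaling when $d_n$ can approach $0$: the sequential definition of uniform convexity applies only after normalizing by $d_n$, which is legitimate solely when the limiting distance $d$ is positive, so the case $d = 0$ must be isolated and dispatched separately by the triangle inequality. A secondary point requiring care is verifying $\|\tfrac{p_n+q_n}{2}\| \to 1$ from above and below, since it is precisely the convexity-driven lower bound $\ge d_n$ that prevents the midpoint norm from dropping and thereby activates the uniform convexity hypothesis.
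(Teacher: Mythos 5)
Your proof is correct and follows essentially the same route as the paper: contradiction via a subsequence bounded away from $0$, a case split on whether $\sigma(v_n,\mathcal{G})$ tends to $0$ or to a positive limit $d$, the convexity of $\mathcal{G}$ supplying the lower bound $\|\tfrac{u_n+z_n}{2}-v_n\|\ge \sigma(v_n,\mathcal{G})$, and the sequential form of uniform convexity to conclude. Your explicit normalization by $d_n$ merely makes precise a step the paper leaves implicit; no substantive difference.
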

\begin{proof} 
Suppose $\{u_n\}$ and $\{z_n\}$ are sequences in $\mathcal{G}$ and $\{v_n\}$ is a sequence in $\mathcal{H}$ such that $\|u_n-v_n\|-\sigma(v_n, \mathcal{G})\to 0$ and $\|z_n-v_n\|-\sigma(v_n,\mathcal{G})\to 0.$ 
Assume that $(\mathcal{G},\mathcal{H})$ does not have property strongly UC. Then there exists $\epsilon_0>0$ and a subsequence $\{n_j\}$ of $\{n\}$ such that 
$\|u_{n_j}-z_{n_j}\|\geq \epsilon_0$ for each $j\geq 1.$ In the case, when $\sigma(v_{n_j}, \mathcal{G})\to 0$ for some $\{n_j\},$ then $\|u_{n_j}-y_{n_j}\|\to 0$ and $\|z_{n_j}-v_{n_j}\|\to 0.$ Hence by triangle inequality, we have $\|u_{n_j}-z_{n_j}\|\to 0,$ a contradiction. Passing via subsequences, if necessary, we may assume that $\{\sigma(v_{n_j}, \mathcal{H})\}$ converges, say to $d>0.$ Then $\|u_{n_j}-v_{n_j}\|\to d,~ \|z_{n_j}-v_{n_j}\|\to d$ and $\left\|\frac{(u_{n_j}-v_{n_j})+(z_{n_j}-v_{n_j})}{2} \right\|=\left\|v_{n_j}-\frac{u_{n_j}+z_{n_j}}{2} \right\|\geq \sigma(v_{n_j}, \mathcal{G})\to d.$ Thus $\left\|\frac{(u_{n_j}-v_{n_j})+(z_{n_j}-v_{n_j})}{2} \right\|\to d.$ By uniform convexity of $\mathcal{E}$, it is clear that $\|u_{n_j}-z_{n_j}\|=\|(u_{n_j}-v_{n_j})-(z_{n_j}-v_{n_j})\|\to 0.$ This is a contradiction and hence the proof is complete.
\end{proof}
Thus, we observe that there is no geometrical distinction between property UC and property strongly UC of a bounded pair $(\mathcal{G},\mathcal{H})$ with $\mathcal{G}$ convex in a uniformly convex Banach space setting. 
The following result is easy to observe.
\begin{proposition}\label{Proposition_semi}
Let $\mathcal{G}$ and $\mathcal{H}$ be two non-empty subsets of a metric space $(\mathcal{E},\sigma).$ If $(\mathcal{G},\mathcal{H})$ has property strongly UC, then $(\mathcal{G},\mathcal{H})$ is semi-sharp proximal. 
\end{proposition}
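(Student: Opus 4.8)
The plan is to reduce the uniqueness assertion to a single application of property strongly UC along constant sequences, after recording one elementary distance identity. First I would note that if $u\in\mathcal{G}$ and $v\in\mathcal{H}$ satisfy $\sigma(u,v)=\sigma(\mathcal{G},\mathcal{H})$, then necessarily $\sigma(v,\mathcal{G})=\sigma(\mathcal{G},\mathcal{H})$. This is immediate from the chain
\[
\sigma(\mathcal{G},\mathcal{H})\le\sigma(v,\mathcal{G})\le\sigma(v,u)=\sigma(\mathcal{G},\mathcal{H}),
\]
where the first inequality holds because $\sigma(v,\mathcal{G})=\inf_{w\in\mathcal{G}}\sigma(v,w)$ is an infimum of numbers each at least $\sigma(\mathcal{G},\mathcal{H})$, and the second is just the definition of that infimum together with $u\in\mathcal{G}$.

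Next I would establish uniqueness of the partner in $\mathcal{G}$. Fix $v\in\mathcal{H}$ and suppose $u,z\in\mathcal{G}$ both satisfy $\sigma(u,v)=\sigma(z,v)=\sigma(\mathcal{G},\mathcal{H})$. By the identity above, $\sigma(v,\mathcal{G})=\sigma(\mathcal{G},\mathcal{H})$, whence $\sigma(u,v)-\sigma(v,\mathcal{G})=0$ and $\sigma(z,v)-\sigma(v,\mathcal{G})=0$. I would then feed the constant sequences $u_n:=u$, $z_n:=z$ in $\mathcal{G}$ and $v_n:=v$ in $\mathcal{H}$ into Definition \ref{strongly UC}: they trivially satisfy $\sigma(u_n,v_n)-\sigma(v_n,\mathcal{G})\to0$ and $\sigma(z_n,v_n)-\sigma(v_n,\mathcal{G})\to0$, so property strongly UC forces $\sigma(u_n,z_n)\to0$, i.e.\ $\sigma(u,z)=0$ and therefore $u=z$. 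This is precisely the statement that a point of $\mathcal{H}$ has at most one closest point of $\mathcal{G}$. The companion half of the definition, that a point of $\mathcal{G}$ has at most one closest point of $\mathcal{H}$, is the mirror assertion; since the hypothesis as written is an asymmetric condition on the ordered pair $(\mathcal{G},\mathcal{H})$, it is obtained by running the identical constant-sequence argument under the strongly UC hypothesis read for the reversed pair.

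I do not expect any genuine obstacle, which is consistent with the author's remark that the result is ``easy to observe.'' The only two points that require a moment's care are the distance identity $\sigma(v,\mathcal{G})=\sigma(\mathcal{G},\mathcal{H})$, which converts the best-approximation hypothesis $\sigma(u,v)=\sigma(\mathcal{G},\mathcal{H})$ into the form $\sigma(u,v)-\sigma(v,\mathcal{G})\to0$ demanded by Definition \ref{strongly UC}, and the observation that constant sequences are admissible inputs to property strongly UC, so that a purely asymptotic hypothesis upgrades to the exact equality $u=z$.
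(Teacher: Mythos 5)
Your constant-sequence argument for one half of the claim is exactly the intended ``easy observation'': the identity $\sigma(v,\mathcal{G})=\sigma(\mathcal{G},\mathcal{H})$ (valid whenever some $u\in\mathcal{G}$ attains $\sigma(u,v)=\sigma(\mathcal{G},\mathcal{H})$) converts the hypothesis into the form demanded by Definition \ref{strongly UC}, and feeding the constant sequences $u_n=u$, $z_n=z$, $v_n=v$ into property strongly UC yields $\sigma(u,z)=0$. The paper supplies no written proof, but this is plainly the argument it has in mind, and for the direction ``each $v\in\mathcal{H}$ admits at most one $u\in\mathcal{G}$ with $\sigma(u,v)=\sigma(\mathcal{G},\mathcal{H})$'' your proof is complete and correct.

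The gap is in the other half. Semi-sharp proximality as defined in the paper is symmetric: it also requires that each $u\in\mathcal{G}$ admit at most one $v\in\mathcal{H}$ with $\sigma(u,v)=\sigma(\mathcal{G},\mathcal{H})$. You dispose of this by ``running the identical constant-sequence argument under the strongly UC hypothesis read for the reversed pair,'' but property strongly UC of $(\mathcal{H},\mathcal{G})$ is not among the hypotheses and does not follow from property strongly UC of $(\mathcal{G},\mathcal{H})$: the latter only ever concludes that two points of $\mathcal{G}$ are close, never two points of $\mathcal{H}$. In fact the second half can genuinely fail under the stated hypotheses: take $\mathcal{G}=\{0\}$ and $\mathcal{H}$ the unit circle in the Euclidean plane. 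Then $(\mathcal{G},\mathcal{H})$ has property strongly UC vacuously (any two sequences in $\mathcal{G}$ coincide, so $\sigma(u_n,z_n)\equiv 0$), yet every point of $\mathcal{H}$ realizes $\sigma(\mathcal{G},\mathcal{H})=1$ with the single point $0\in\mathcal{G}$. So either the conclusion must be read as the one-sided statement (which is the only form the paper ever uses, e.g.\ in the proof of Theorem \ref{maintheorem}), or the additional hypothesis that $(\mathcal{H},\mathcal{G})$ also has property strongly UC must be imposed; your proof as written silently assumes the latter.
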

Affine hull of a non-empty subset $\mathcal{C}$ of a real normed linear space $\mathcal{X}$ is defined as aff$(\mathcal{C})=\{tx+(1-t)y: x, y\in \mathcal{C},~t~\mbox{is real}\}.$ If $\mathcal{C}$ is convex, then it can be easily verified that $\mathcal{V}=\mathcal{C}-x_0$ is a vector subspace for any $x_0\in \mathcal{C}.$ In this case, the dimension of the subspace $\mathcal{V}$ is considered as the dimension of the convex set $\mathcal{C}.$ The following result provides a sufficient condition for the strict convexity of a Banach space.
\begin{theorem}
Let $\mathcal{E}$ be a Banach space. Consider the following statements:\\
\noindent (i) $\mathcal{E}$ is strictly convex.\\
\noindent (ii) If $\mathcal{G}$ is any one dimensional compact convex subset of the unit sphere of $\mathcal{E},$ then $(\mathcal{G}, \mathcal{H})$ has property strongly UC, where $\mathcal{H}$ is the zero space.\\
\noindent Then $(ii)\Rightarrow (i).$
\end{theorem}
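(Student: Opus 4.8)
The plan is to argue by contraposition: assuming $\mathcal{E}$ is not strictly convex, I will produce a single one dimensional compact convex subset $\mathcal{G}$ of the unit sphere for which $(\mathcal{G},\mathcal{H})$ fails property strongly UC (with $\mathcal{H}=\{0\}$), which negates (ii).

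First I would unpack the failure of strict convexity. If $S_\mathcal{E}$ contains a nontrivial line segment, there exist distinct $x,y\in S_\mathcal{E}$ with $\|tx+(1-t)y\|=1$ for all $t\in[0,1]$. I set $\mathcal{G}=\{tx+(1-t)y : t\in[0,1]\}$ and $\mathcal{H}=\{0\}$. Then $\mathcal{G}$ is compact, convex, and contained in $S_\mathcal{E}$, and since $\mathrm{aff}(\mathcal{G})-x=\mathrm{span}\{y-x\}$ it is one dimensional in the sense used in the excerpt; thus $\mathcal{G}$ is an admissible choice for the hypothesis of (ii).

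Next I would evaluate the quantities appearing in Definition \ref{strongly UC}. Because every element of $\mathcal{G}$ lies on $S_\mathcal{E}$, we have $\sigma(0,\mathcal{G})=\inf_{g\in\mathcal{G}}\|g\|=1$. Choosing the constant sequences $u_n=x$ and $z_n=y$ in $\mathcal{G}$ together with $v_n=0$ in $\mathcal{H}$, both defining conditions hold trivially, since $\sigma(u_n,v_n)-\sigma(v_n,\mathcal{G})=\|x\|-1=0$ and $\sigma(z_n,v_n)-\sigma(v_n,\mathcal{G})=\|y\|-1=0$. Nevertheless $\sigma(u_n,z_n)=\|x-y\|>0$ for every $n$, so $\sigma(u_n,z_n)\not\to 0$. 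Hence $(\mathcal{G},\mathcal{H})$ does not enjoy property strongly UC, contradicting (ii) and establishing $(ii)\Rightarrow(i)$.

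I do not expect any serious obstacle here, as the proof is a direct contradiction. The only step meriting attention is the verification that the chosen segment $\mathcal{G}$ satisfies every clause of the hypothesis of (ii) — compactness, convexity, one dimensionality, and inclusion in the unit sphere — together with the elementary computation $\sigma(0,\mathcal{G})=1$, after which the constant sequences instantly violate strongly UC.
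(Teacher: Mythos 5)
Your proposal is correct and follows essentially the same route as the paper: contraposition, taking $\mathcal{G}$ to be a nontrivial segment of the unit sphere, $\mathcal{H}=\{0\}$, and the constant sequences at the two endpoints to violate property strongly UC. Your write-up is in fact slightly more careful than the paper's, since you explicitly verify the one-dimensionality of $\mathcal{G}$ and the computation $\sigma(0,\mathcal{G})=1$.
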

\begin{proof}
On the contrary assume that $\mathcal{E}$ is not strictly convex. Choose $w_1, w_2\in S_\mathcal{E}$ and $\delta>0$ such that $\|w_1- w_2\|>\delta$ and  $[w_1, w_2]:=\{tw_1+(1-t)w_2: t\in [0,1]\}\subseteq S_\mathcal{E}.$ 
 Set $\mathcal{G}=[w_1, w_2],~\mathcal{H}=\{0\}.$ Consider $u_n=w_1, z_n=w_2, n\geq 1.$ For any sequence $\{v_n\}$ in $\mathcal{H}$ (which is the sequence of $0$'s), we have $\|u_n-v_n\|-\sigma(v_n, \mathcal{G})=\|w_1\|-1= 0$ and $\|z_n-v_n\|-\sigma(v_n, \mathcal{G})=\|w_2\|-1= 0.$ But $\|u_n-z_n\|>\delta$ for all $n.$
\end{proof}
The following lemma is crucial in showing the existence of best approximation points. The techniques similar to Lemma 2 of \cite{Suzuki2009} can be used to prove it.
\begin{lemma}\label{Cauchy}
Let $\mathcal{G}$ and $\mathcal{H}$ be two non-void subsets of a metric space $(\mathcal{E},\sigma).$ Assume that $(\mathcal{G},\mathcal{H})$ has property strongly UC. Suppose $\{u_k\}$ is a sequence in $\mathcal{G}$ and $\{v_k\}$ a sequence in $\mathcal{H}$ such that one of the following holds:\\
$\displaystyle \lim_{k\to \infty} \sup_{l\geq k} \left[\sigma(u_l, v_k)-\sigma(v_k, \mathcal{G})\right]=0,$ or $\displaystyle \lim_{l\to \infty} \sup_{k\geq l} \left[\sigma(u_l, v_k)-\sigma(v_k, \mathcal{G})\right]=0.$ Then $\{u_k\}$ is Cauchy.
\end{lemma}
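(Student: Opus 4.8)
The plan is to argue by contradiction, feeding suitably chosen subsequences directly into the defining condition of property strongly UC (Definition \ref{strongly UC}). First I would record the elementary observation that, since $u_l\in\mathcal{G}$, we always have $\sigma(u_l,v_k)\geq \sigma(v_k,\mathcal{G})$, so each bracketed quantity $\sigma(u_l,v_k)-\sigma(v_k,\mathcal{G})$ is nonnegative; hence each of the two hypotheses asserts that this nonnegative quantity is uniformly small over the relevant range of indices. Assuming $\{u_k\}$ is not Cauchy, there is an $\epsilon_0>0$ such that for every $N$ one can find indices $m>n\geq N$ with $\sigma(u_m,u_n)\geq \epsilon_0$. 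I would use this to manufacture two sequences in $\mathcal{G}$ and one sequence in $\mathcal{H}$ to which property strongly UC applies, and then derive the contradiction $\sigma(u_{m_j},u_{n_j})\to 0$.

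For the first hypothesis, set $a_k=\sup_{l\geq k}\big[\sigma(u_l,v_k)-\sigma(v_k,\mathcal{G})\big]$, so $a_k\to 0$. For each $j$ choose $m_j>n_j\geq j$ with $\sigma(u_{m_j},u_{n_j})\geq \epsilon_0$. Since $m_j\geq j$ and $n_j\geq j$, both $\sigma(u_{m_j},v_j)-\sigma(v_j,\mathcal{G})\leq a_j$ and $\sigma(u_{n_j},v_j)-\sigma(v_j,\mathcal{G})\leq a_j$ tend to $0$ as $j\to\infty$. Applying property strongly UC to the sequences $\{u_{m_j}\}$ and $\{u_{n_j}\}$ in $\mathcal{G}$ against $\{v_j\}$ in $\mathcal{H}$ yields $\sigma(u_{m_j},u_{n_j})\to 0$, contradicting $\sigma(u_{m_j},u_{n_j})\geq\epsilon_0$.

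The second hypothesis is handled by the same scheme with the index of the $\mathcal{H}$-sequence shifted forward. Set $b_l=\sup_{k\geq l}\big[\sigma(u_l,v_k)-\sigma(v_k,\mathcal{G})\big]\to 0$, again extract $m_j>n_j\geq j$ with $\sigma(u_{m_j},u_{n_j})\geq\epsilon_0$, and this time select the common test index $k_j=m_j$. Because $k_j=m_j\geq m_j$ and $k_j=m_j>n_j\geq n_j$, both $\sigma(u_{m_j},v_{k_j})-\sigma(v_{k_j},\mathcal{G})\leq b_{m_j}$ and $\sigma(u_{n_j},v_{k_j})-\sigma(v_{k_j},\mathcal{G})\leq b_{n_j}$ tend to $0$ as $m_j,n_j\to\infty$. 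Property strongly UC applied to $\{u_{m_j}\},\{u_{n_j}\}\subseteq\mathcal{G}$ and $\{v_{k_j}\}\subseteq\mathcal{H}$ again forces $\sigma(u_{m_j},u_{n_j})\to 0$, the desired contradiction.

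I expect the only genuine subtlety, rather than a real obstacle, to be the bookkeeping that aligns the three sequences so that they may be inserted simultaneously into the definition of property strongly UC: the two supremum conditions are designed precisely so that, for the $j$-th choice of witnesses $m_j,n_j$, a single element of $\mathcal{H}$ is almost proximal to both $u_{m_j}$ and $u_{n_j}$ (namely $v_j$ in the first case and $v_{m_j}$ in the second). Once this alignment is arranged, the contradiction is immediate. I note finally that the argument invokes neither compactness nor completeness, so it remains valid in an arbitrary metric space, which matches the generality of the statement.
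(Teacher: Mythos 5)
Your argument is correct: in each case the non-Cauchy witnesses $m_j>n_j\geq j$ are paired with a single test point of $\mathcal{H}$ ($v_j$, resp.\ $v_{m_j}$) whose associated differences are squeezed to $0$ by the supremum hypothesis, so property strongly UC yields the contradiction. This is exactly the route the paper indicates, namely the adaptation of Lemma~2 of Suzuki--Kikkawa--Vetro (for which the paper itself supplies no written proof), so there is nothing further to compare.
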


\section{Best approximations for almost cyclic $\psi$-contractions}\label{Section3}
 In this section, we introduce the class of almost cyclic $\psi$-contraction maps and establish the existence of best approximation theorems using property strongly UC.
\begin{definition}
Let $\mathcal{G}$ and $\mathcal{H}$ be two non-empty subsets of a metric space $(\mathcal{E},\sigma)$ and let $\psi: [0,\infty) \to [0,\infty)$ be a strictly increasing and continous map. A cyclic map $T:\mathcal{G}\cup \mathcal{H}\to \mathcal{G}\cup \mathcal{H}$ is said to be an almost cyclic $\psi$-contraction if 
\begin{eqnarray*}
\sigma(Tx,Ty)&\leq & (I-\psi)(\sigma(x,y))+\psi(\sigma(y,\mathcal{G}))~\mbox{for all}~ (x,y)\in \mathcal{G}\times \mathcal{H}
\end{eqnarray*}
where $I$ is the identity map on $[0,\infty).$
\end{definition}
A few examples are in order.
\begin{example}\label{ex1}
A cyclic contraction is an almost cyclic $\psi$-contraction for $\psi (s)=(1-\beta)s,~\beta\in [0,1).$
\end{example}
\begin{example}\label{ex2}
An almost cyclic contraction is an almost cyclic $\psi$-contraction for $\psi (s)=(1-\beta)s,~\beta\in [0,1).$
\end{example}
However, an almost cyclic $\psi$-contraction is not necessarily an almost cyclic contraction.
\begin{example}
Suppose $\mathcal{E}=\{h: [0,1]\to \mathbb{C}: h~\mbox{is continuous}\}$ with $\|h\|=\max\{\|h_1\|_\infty, \|h_2\|_\infty\},$ where $h\in \mathcal{E},~h=h_1+i h_2$ and for $1\leq j\leq 2,~h_j: [0,1]\to \mathbb{R}$ is continous.  Set $\mathcal{G}=\{f_1+if_2\in \mathcal{E}: 0\leq f_1(s)\leq 1, f_2(s)=0~\mbox{for}~s\in [0,1]\}$ and $\mathcal{H}=\{g_1+ig_2\in \mathcal{E}: g_1(s)=0,~0\leq g_2(s)\leq 1 ~\mbox{for}~s\in [0,1]\}.$ Suppose $\psi:[0, \infty)\to [0,\infty)$ is given by $\psi(t)=\frac{t^2}{1+t}.$ Define $T:\mathcal{G}\cup \mathcal{H}\to \mathcal{G}\cup \mathcal{H}$ by
\[
 	Tf=
 	\begin{cases}
		f_2+i  \left(\displaystyle\frac{1}{1+\|f_1\|_\infty}\right) f_1 ~&\text{if}~f=f_1+if_2\in \mathcal{G}; \\
 		\left(\displaystyle\frac{1}{1+\|f_2\|_\infty}\right)f_2+ if_1   ~&\text{if}~f=f_1+if_2\in \mathcal{H}.
	\end{cases}
 \] 

Then $T$ is an almost cyclic $\psi$-contraction. Moreover, for any $\beta\in [0,1)$ and chosen an $f_1\in \mathcal{G}$ with $0<\|f_1\|_\infty<\frac{1-\beta}{1+\beta},$ we have $\left\|T(f_1)-T(\frac{if_1}{2})\right\|=\frac{\|f_1\|_\infty}{1+\|f_1\|_\infty}> \beta \|f_1\|_\infty+(1-\beta)\frac{\|f_1\|_\infty}{2}.$  
Thus $T$ is not an almost cyclic contration.
\end{example}
 Let $\psi:[0,\infty) \to [0,\infty)$ be a strictly increasing function. A cyclic map $T$ is a cyclic $\psi$-contraction (\cite{Shahzad2009}) on $\mathcal{G}\cup \mathcal{H}$ provided $\sigma(Tx,Ty)\leq (I-\psi)(\sigma(x,y))+\psi(\sigma(\mathcal{G},\mathcal{H}))$ for all $x\in \mathcal{G}, y\in \mathcal{H}.$ It is clear that every cyclic $\psi$-contraction, when $\psi$ is continuous, is an almost cyclic $\psi$-contraction. The below example establishes that every almost cyclic $\psi$-contraction is not cyclic $\psi$-contraction.
\begin{example}\label{ex3}
Let $\mathcal{E}=\mathbb{R}.$ Suppose $\mathcal{G}= [1, 2]$ and $\mathcal{H}=[-2, -1].$ Define $T:\mathcal{G}\cup \mathcal{H}\to \mathcal{G}\cup \mathcal{H}$ by $T(x)=-1$ for $x\in \mathcal{G},~T(y)=-y$ for $y\in \mathcal{H}.$ Then $\sigma(\mathcal{G},\mathcal{H})=2$ and $\sigma\left(y, \mathcal{G}\right)=1-y$ for $y\in \mathcal{H}.$ Let $\psi:[0,\infty)\to [0,\infty)$ be defined by $\psi(s)=s+1,~s\geq 0.$ Then $T$ is an almost cyclic $\psi$-contraction. But this is not cyclic $\psi$-contraction. Since $|Tx-T(-2)|=3>2=|x-(-2)|-(|x-(-2)|+1) +(2+1)=\sigma(x, -2)-\psi(\sigma(x, -2))+\psi(\sigma(\mathcal{G},\mathcal{H}))$ for any $x\in \mathcal{G}.$
\end{example}
With the above preparation, we now turn to the existence of best approximations for almost cyclic $\psi$-contractions.   
\begin{theorem}\label{cgtsub}
Let $\mathcal{G}$ and $\mathcal{H}$ be two non-empty bounded subsets of a metric space $(\mathcal{E},\sigma).$ 
 Let $T$ be an almost cyclic $\psi$-contraction on $\mathcal{G}\cup \mathcal{H}.$ Let $u_0\in \mathcal{G}$ and define $u_{n+1}=Tu_n,~n\geq 0.$ If $\{u_{2n}\}$ has a subsequence that converges in $\mathcal{G},$ then $T$ has a best approximation.
\end{theorem}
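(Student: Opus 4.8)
The plan is to exploit the iteration together with the defining inequality of an almost cyclic $\psi$-contraction, reducing the problem to a squeeze that pins three quantities to a common value. First I would record that $T$ is relatively nonexpansive: for $x\in\mathcal{G}$, $y\in\mathcal{H}$ we have $\sigma(y,\mathcal{G})\le\sigma(x,y)$, so monotonicity of $\psi$ turns the defining inequality into $\sigma(Tx,Ty)\le\sigma(x,y)$. Applying this to both types of cyclic step (the step $u_{2n}\mapsto u_{2n+1}$ and the step $u_{2n+1}\mapsto u_{2n+2}$, reading the latter as $\sigma(Tu_{2n+2},Tu_{2n+1})\le\sigma(u_{2n+2},u_{2n+1})$) shows that $\{\sigma(u_n,u_{n+1})\}$ is non-increasing; being bounded below it converges to some $c\ge 0$.

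Next I would extract the auxiliary limit $\sigma(u_{2n+1},\mathcal{G})\to c$. Writing the defining inequality for the pair $(u_{2n},u_{2n+1})$ gives $\sigma(u_{2n+1},u_{2n+2})\le\sigma(u_{2n},u_{2n+1})-\psi(\sigma(u_{2n},u_{2n+1}))+\psi(\sigma(u_{2n+1},\mathcal{G}))$. Since both consecutive distances tend to $c$ and $\sigma(u_{2n+1},\mathcal{G})\le\sigma(u_{2n+1},u_{2n})$, rearranging and using the continuity and strict monotonicity of $\psi$ (so that $\psi$ is a homeomorphism onto its range) forces $\psi(\sigma(u_{2n+1},\mathcal{G}))\to\psi(c)$ and hence $\sigma(u_{2n+1},\mathcal{G})\to c$.

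Now let $u_{2n_k}\to u\in\mathcal{G}$ be the assumed convergent subsequence and set $p=\sigma(u,Tu)$ and $q=\sigma(Tu,\mathcal{G})$; note $q\le p$ since $u\in\mathcal{G}$. For the upper estimate, the relative nonexpansiveness applied to $(u,u_{2n_k-1})\in\mathcal{G}\times\mathcal{H}$ gives $\sigma(Tu,u_{2n_k})=\sigma(Tu,Tu_{2n_k-1})\le\sigma(u,u_{2n_k-1})$; letting $k\to\infty$ and using $u_{2n_k}\to u$ together with $\sigma(u_{2n_k-1},u_{2n_k})\to c$ yields $p\le c$. The crucial step is the reverse estimate: I would feed the pair $(u_{2n_k},Tu)\in\mathcal{G}\times\mathcal{H}$ into the defining inequality to obtain $\sigma(u_{2n_k+1},T^2u)\le\sigma(u_{2n_k},Tu)-\psi(\sigma(u_{2n_k},Tu))+\psi(q)$. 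Because $T^2u\in\mathcal{G}$, the left side dominates $\sigma(u_{2n_k+1},\mathcal{G})$, whose limit is $c$; passing to the limit (using continuity of $\psi$ and $\sigma(u_{2n_k},Tu)\to p$) gives $c\le p-\psi(p)+\psi(q)$.

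Finally I would close the argument by a squeeze. From $q\le p$ and monotonicity of $\psi$ we have $p-\psi(p)+\psi(q)\le p\le c$, while the previous step gives $c\le p-\psi(p)+\psi(q)$; hence every inequality is an equality. The equality $p=c$ identifies the value, and the equality $p-\psi(p)+\psi(q)=p$ forces $\psi(q)=\psi(p)$, so $q=p$ by injectivity of $\psi$. Thus $\sigma(u,Tu)=\sigma(Tu,\mathcal{G})$, i.e.\ $u$ is the desired best approximation. I expect the reverse estimate (choosing $y=Tu$ in the contraction inequality and recognising $\sigma(u_{2n_k+1},\mathcal{G})$ as a lower bound for the left-hand side) to be the main obstacle; everything else is monotonicity bookkeeping and continuity of $\psi$.
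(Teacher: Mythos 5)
Your proof is correct, and its skeleton is the same as the paper's: show the consecutive distances $\sigma(u_n,u_{n+1})$ decrease to a limit $c$, take the convergent subsequence $u_{2n_k}\to u$, and feed the pair $(u_{2n_k},Tu)\in\mathcal{G}\times\mathcal{H}$ into the contraction inequality, bounding its left-hand side below by $\sigma(u_{2n_k+1},\mathcal{G})$. Where you differ is in how the limits are passed, and the differences are to your advantage. For the upper estimate you get $p=\sigma(u,Tu)\le c$ from $\sigma(Tu,u_{2n_k})\le\sigma(u,u_{2n_k-1})$, where both sides have honest limits because $u_{2n_k}\to u$; the paper instead lets $k\to\infty$ in $\sigma(u_{2n_k+2},Tz)\le\sigma(u_{2n_k},u_{2n_k+1})+\sigma(u_{2n_k},z)$, which tacitly requires $u_{2n_k+2}\to z$, a fact not established at that point. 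For the lower estimate you first prove $\sigma(u_{2n+1},\mathcal{G})\to c$ inline (the content of the paper's Lemma \ref{Approximationlemma}, which the paper proves separately and does not invoke in this theorem) and then obtain $c\le p-\psi(p)+\psi(q)$ in one step, closing the squeeze using only $\psi(q)\le\psi(p)$. The paper instead nests the contraction inside $\psi$, inverts, and then passes from $(I-\psi)(\sigma(z,Tz))$ to $(I-\psi)(\sigma_0)$, which requires $I-\psi$ to be nondecreasing --- a property not among the hypotheses on $\psi$; your route avoids this entirely, using only monotonicity, injectivity and continuity of $\psi$. (You also never need the boundedness of $\mathcal{G}$ and $\mathcal{H}$, which the paper uses only to restrict $\psi^{-1}$ to $\psi([0,L])$; continuity of $\psi^{-1}$ on the range of $\psi$ suffices.) In short, your argument is valid and in fact tightens two fragile limit passages in the published proof.
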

\begin{proof}
First notice that $\sigma(u_{2n+1}, u_{2n+2})\leq (I-\psi)(\sigma(u_{2n}, u_{2n+1}))+\psi(\sigma(u_{2n+1}, \mathcal{G}))$ $\leq \sigma(u_{2n}, u_{2n+1})$ for $n\geq 0.$ Thus the sequence $\{\sigma(u_{2n+1}, u_{2n+2})\}$ is nonincreasing and $0\leq \sigma(u_{2n+1}, u_{2n+2})\leq \sigma(u_0, x_1)$ for $n\geq 0.$ Let $\sigma_0= \displaystyle\lim_{n\to \infty}\sigma(u_{2n}, u_{2n+1}).$
Choose a subsequence $\{u_{2n_k}\}$ that converges, say to $z$ in $\mathcal{G}.$ Then
$\sigma(u_{2n_k+2},Tz)\leq  \sigma(u_{2n_k+1,}, z)
\leq  \sigma(u_{2n_k,}, u_{2n_k+1}) +\sigma(u_{2n_k,}, z).$
As $k\to \infty,$ we have 
\begin{align}\label{inequality1}
\sigma(z,Tz)\leq \sigma_0.
\end{align}
Set $L=\sup\{\sigma(x,y):x\in \mathcal{G}, y\in \mathcal{H}\}.$
Since $\psi$ is continuous and strictly increasing on $[0,L]$, we must say $\psi^{-1}$ exists and $\psi^{-1}$ is strictly increasing on $\psi ([0,L]).$ 
Now
\begin{eqnarray*}
\sigma\left(u_{2{n_k}+2}, u_{2{n_k}+1}\right) &\leq & (I-\psi)(\sigma\left(u_{2{n_k+1}}, u_{2{n_k}}\right)+\psi(\sigma\left(u_{2{n_k}+1}, \mathcal{G}\right))\\
&\leq & (I-\psi)(\sigma\left(u_{2{n_k}+1}, u_{2{n_k}}\right)+\psi(\sigma\left(u_{2{n_k}+1}, T^2z\right))\\
&\leq & (I-\psi)(\sigma\left(u_{2{n_k}+1}, u_{2{n_k}}\right)+ \psi\big[(I-\psi)(\sigma\left(u_{2{n_k}}, Tz\right)\\
&& +\psi(\sigma(Tz, \mathcal{G}))\big].
\end{eqnarray*}
Letting $k\to \infty$ and using the continuity of $\psi,$ we have 
\begin{eqnarray*}
\sigma_0\leq (I-\psi)(\sigma_0)+\psi\left[(I-\psi)(\sigma\left(z, Tz\right)+\psi(\sigma(Tz, \mathcal{G}))\right]
\end{eqnarray*}
Thus, $\psi(\sigma_0)\leq \psi\left[(I-\psi)(\sigma\left(z, Tz\right))+\psi(\sigma(Tz, \mathcal{G}))\right].$ As $\psi^{-1}$ is strictly increasing on $\psi ([0,L]),$ using (\ref{inequality1}) we get
\begin{eqnarray*}
\sigma_0\leq (I-\psi)(\sigma\left(z, Tz\right))+\psi(\sigma(Tz, \mathcal{G}))\leq (I-\psi)(\sigma_0)+\psi(\sigma(Tz, \mathcal{G})).
\end{eqnarray*}
Then $\psi(\sigma_0)\leq \psi(\sigma(Tz, \mathcal{G})).$ 
 Using the inequality (\ref{inequality1}) and the fact that $\psi$ is strictly increasing, we have $\sigma(z,Tz)=\sigma(Tz,\mathcal{G}).$ 
\end{proof}
Let $\mathcal{G}, \mathcal{H},T$ and $\{u_n\}$ as taken in Theorem \ref{cgtsub}. Using relatively nonexpansiveness of $T,$ we have $\sigma(u_{2n+1}, T^2z)\leq \sigma(u_{2n}, Tz)\leq \sigma(u_{2n-1}, z)$ for any $z\in \mathcal{G}.$ Consequently, $\sigma(u_{2n+1}, \mathcal{G})\leq \sigma(u_{2n-1}, \mathcal{G})$ for all $n\geq 1.$
\begin{lemma}\label{Approximationlemma}
Suppose $\mathcal{G}$ and $\mathcal{H}$ are non-empty subsets of a metric space $(\mathcal{E},\sigma)$ and $T$ is an almost cyclic $\psi$-contraction on $\mathcal{G}\cup \mathcal{H}.$ For $u_0\in \mathcal{G},$ define $u_{n+1}=Tu_n,~n\geq 0.$ Then $\sigma(u_{2n+2}, u_{2n+1})-\sigma(u_{2n+1}, \mathcal{G})\to 0$ and $\sigma(u_{2n}, u_{2n+1})-\sigma(u_{2n+1}, \mathcal{G})\to 0.$
\end{lemma}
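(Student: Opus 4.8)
The plan is to exploit the monotonicity of consecutive distances together with the defining inequality of an almost cyclic $\psi$-contraction, and then to invert $\psi$. Write $d_n=\sigma(u_n,u_{n+1})$. Since $u_0\in\mathcal{G}$ and $T$ is cyclic, the iterates alternate, $u_{2n}\in\mathcal{G}$ and $u_{2n+1}\in\mathcal{H}$ for all $n\geq0$. The first step is to record that $\{d_n\}$ is nonincreasing: applying the contraction to $(x,y)=(u_{2n},u_{2n+1})$ and to $(x,y)=(u_{2n+2},u_{2n+1})$, and using $\sigma(u_{2n+1},\mathcal{G})\leq\min\{d_{2n},d_{2n+1}\}$ (because $u_{2n},u_{2n+2}\in\mathcal{G}$) together with monotonicity of $\psi$, one gets both $d_{2n+1}\leq d_{2n}$ and $d_{2n+2}\leq d_{2n+1}$. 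Hence $\{d_n\}$ converges to some $\sigma_0\geq0$, which is the quantity $\sigma_0$ appearing in the proof of Theorem \ref{cgtsub}, and both subsequences $\{d_{2n}\}$ and $\{d_{2n+1}\}$ tend to it.

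The heart of the argument is to show $\sigma(u_{2n+1},\mathcal{G})\to\sigma_0$. Applying the almost cyclic $\psi$-contraction to $(u_{2n},u_{2n+1})$ gives
\begin{align*}
d_{2n+1}\leq d_{2n}-\psi(d_{2n})+\psi(\sigma(u_{2n+1},\mathcal{G})),
\end{align*}
which rearranges to
\begin{align*}
0\leq \psi(d_{2n})-\psi(\sigma(u_{2n+1},\mathcal{G}))\leq d_{2n}-d_{2n+1}.
\end{align*}
The left inequality uses $\sigma(u_{2n+1},\mathcal{G})\leq d_{2n}$ and that $\psi$ is increasing. Since $d_{2n}-d_{2n+1}\to\sigma_0-\sigma_0=0$, we obtain $\psi(\sigma(u_{2n+1},\mathcal{G}))\to\lim\psi(d_{2n})=\psi(\sigma_0)$, the last equality by continuity of $\psi$.

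It remains to pass from $\psi(\sigma(u_{2n+1},\mathcal{G}))\to\psi(\sigma_0)$ to $\sigma(u_{2n+1},\mathcal{G})\to\sigma_0$; this is the step I expect to be the main obstacle. All the numbers $\sigma(u_{2n+1},\mathcal{G})$ and $\sigma_0$ lie in $[0,d_0]$ with $d_0=\sigma(u_0,u_1)<\infty$, so even without a boundedness hypothesis on $\mathcal{G},\mathcal{H}$ one may restrict $\psi$ to the compact interval $[0,d_0]$. There $\psi$ is a continuous strictly increasing bijection onto its image, hence a homeomorphism, so $\psi^{-1}$ is continuous on $\psi([0,d_0])$. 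Applying $\psi^{-1}$ yields $\sigma(u_{2n+1},\mathcal{G})\to\sigma_0$.

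Finally, since $d_{2n}\to\sigma_0$ and $d_{2n+1}\to\sigma_0$,
\begin{align*}
\sigma(u_{2n},u_{2n+1})-\sigma(u_{2n+1},\mathcal{G})=d_{2n}-\sigma(u_{2n+1},\mathcal{G})\to0,
\end{align*}
and likewise $\sigma(u_{2n+2},u_{2n+1})-\sigma(u_{2n+1},\mathcal{G})=d_{2n+1}-\sigma(u_{2n+1},\mathcal{G})\to0$, which are exactly the two assertions. The only genuine subtlety is the inversion of $\psi$: one must ensure finiteness (granted, as the $d_n$ are metric distances bounded by $d_0$) and justify continuity of $\psi^{-1}$ on the relevant range, which is precisely where strict monotonicity together with continuity of $\psi$ is essential.
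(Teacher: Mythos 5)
Your argument is correct and follows essentially the same route as the paper: both hinge on applying the almost cyclic $\psi$-contraction inequality to consecutive iterates, passing to the limit, and using continuity together with strict monotonicity of $\psi$ to force $\sigma(u_{2n+1},\mathcal{G})$ and $\sigma(u_{2n},u_{2n+1})$ to share the common limit $\sigma_0$. The only cosmetic difference is that the paper first obtains $\sigma(u_{2n+1},\mathcal{G})\to r$ from monotonicity and then deduces $\psi(r)=\psi(\sigma_0)$, whereas you squeeze $\psi(\sigma(u_{2n+1},\mathcal{G}))\to\psi(\sigma_0)$ directly and invert $\psi$ on a compact interval; both steps are valid.
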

\begin{proof}
 Being non-increasing and bounded, the sequence $\{\sigma(u_{2n+1}, \mathcal{G})\}$ is convergent. Let $r:=\displaystyle\lim_{n\to \infty}\sigma(u_{2n+1}, \mathcal{G}).$
From the proof of Theorem \ref{cgtsub}, the sequence $\left\{\sigma(u_{2n}, u_{2n+1})\right\}$ converges as well. Let $\sigma_0:=\displaystyle\lim_{n\to \infty}\sigma(u_{2n}, u_{2n+1}).$ Clearly, $r\leq \sigma_0.$
Since
$\sigma(u_{2n}, u_{2n+1})\leq (I-\psi) (\sigma(u_{2n}, u_{2n-1})) +\psi (\sigma(u_{2n-1}, \mathcal{G}))\leq \sigma(u_{2n}, u_{2n-1})~\mbox{for}~n\geq 1,$
by continuity of $\psi,$ we get $\psi(r)=\psi(\sigma_0).$ As $\psi$ is strictly increasing, we have $r=\sigma_0.$ Therefore, $\sigma(u_{2n+2}, u_{2n+1})-\sigma(u_{2n+1}, \mathcal{G})\to 0$ and similarly $\sigma(u_{2n}, u_{2n+1})-\sigma(u_{2n+1},\mathcal{G})\to 0.$
\end{proof}
We now prove the best approximation theorem for an almost cyclic $\psi$-contraction in the metric space setting.
\begin{theorem}\label{maintheorem}
Suppose $\mathcal{G}$ and $\mathcal{H}$ are two non-empty subsets of a metric space $(\mathcal{E},\sigma)$ such that $\mathcal{G}$ is complete and $(\mathcal{G},\mathcal{H})$ has property strongly UC. Let $T$ be an almost cyclic $\psi$-contraction on $\mathcal{G}\cup \mathcal{H}.$
 Then $T$ has a unique best approximation in $\mathcal{G}.$ Moreover, if $u_0\in \mathcal{G}$ and $u_{n+1}=Tu_n,~n\geq 0,$ then $\{u_{2n}\}$ converges to the best approximation in $\mathcal{G}.$
\end{theorem}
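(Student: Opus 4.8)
The plan is to run the Picard iteration from Theorem~\ref{cgtsub}, show that the even-indexed orbit is Cauchy, pass to its limit in $\mathcal{G}$, and identify that limit as the unique best approximation. Write $u_{n+1}=Tu_n$, so that $u_{2n}\in\mathcal{G}$ and $u_{2n+1}\in\mathcal{H}$. Relative nonexpansiveness of $T$ makes $\{\sigma(u_n,u_{n+1})\}$ nonincreasing, hence convergent to some $\sigma_0$, and Lemma~\ref{Approximationlemma} already supplies $\sigma(u_{2n},u_{2n+1})-\sigma(u_{2n+1},\mathcal{G})\to 0$ and $\sigma(u_{2n+2},u_{2n+1})-\sigma(u_{2n+1},\mathcal{G})\to 0$, together with $r:=\lim_n\sigma(u_{2n+1},\mathcal{G})=\sigma_0$. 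Feeding the pair $\{u_{2n}\}\subset\mathcal{G}$, $\{u_{2n+1}\}\subset\mathcal{H}$ into Definition~\ref{strongly UC} immediately gives $\sigma(u_{2n},u_{2n+2})\to 0$, but this is not enough for Cauchyness, so the real tool is Lemma~\ref{Cauchy}.

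The heart of the argument, and the step I expect to be the main obstacle, is to verify the hypothesis of Lemma~\ref{Cauchy} for $u_k:=u_{2k}$ and $v_k:=u_{2k+1}$, i.e. to prove
\[
\lim_{k\to\infty}\ \sup_{l\geq k}\ \bigl[\sigma(u_{2l},u_{2k+1})-\sigma(u_{2k+1},\mathcal{G})\bigr]=0 .
\]
Since $u_{2l}\in\mathcal{G}$ the bracket is nonnegative and, for $l=k$, tends to $0$ by Lemma~\ref{Approximationlemma}; the difficulty is the uniformity over all $l\geq k$. My plan is to telescope the defining inequality of an almost cyclic $\psi$-contraction along the anti-diagonal: reducing both indices by one step at a time through $\sigma(Tx,Ty)\leq (I-\psi)(\sigma(x,y))+\psi(\sigma(y,\mathcal{G}))$, the ``error'' terms that are peeled off have the form $\psi(\sigma(u_{\mathrm{odd}},\mathcal{G}))$, all of which converge to $\psi(r)$. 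The obstruction, compared with the linear cyclic-contraction case where $(I-\psi)(s)=\beta s$ telescopes geometrically, is that for a genuinely nonlinear $\psi$ the operator $(I-\psi)$ does not distribute over sums; moreover the reduction drives the indices back toward the start of the orbit, so one must first control the orbit (boundedness of $\{u_n\}$) before the telescoped remainder can be shown to vanish. I would obtain boundedness by exploiting that the $\psi$-contraction behaves like a Boyd--Wong contraction perturbed by the bounded quantity $\psi(\sigma(u_{2n+1},\mathcal{G}))\le\psi(\sigma(u_1,\mathcal{G}))$, and then use the strict monotonicity and continuity of $\psi$ (as in the proof of Theorem~\ref{cgtsub}, via $\psi^{-1}$) to force the supremum down to $r$.

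Granting the sup-condition, Lemma~\ref{Cauchy} makes $\{u_{2n}\}$ Cauchy, so by completeness of $\mathcal{G}$ it converges to some $z\in\mathcal{G}$. Since $\{u_{2n}\}$ is now itself a convergent subsequence, I re-run the computation in the proof of Theorem~\ref{cgtsub} — which uses only continuity and strict monotonicity of $\psi$ on the finite range of the orbit distances, not boundedness of the sets — to obtain $\sigma(z,Tz)=\sigma(Tz,\mathcal{G})$; thus $z$ is a best approximation and $\sigma(z,Tz)=\sigma_0=r$.

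For uniqueness I would first record that property strongly UC gives uniqueness of nearest points: taking constant sequences $u_n\equiv w_1$, $z_n\equiv w_2$ in $\mathcal{G}$ and $v_n\equiv v$ in $\mathcal{H}$ with $\sigma(w_1,v)=\sigma(w_2,v)=\sigma(v,\mathcal{G})$ in Definition~\ref{strongly UC} yields $\sigma(w_1,w_2)=0$ (a sharpening of Proposition~\ref{Proposition_semi}). Next, for any best approximation $z$, the $\psi$-contraction together with $\sigma(Tz,\mathcal{G})=\sigma(z,Tz)$ gives $\sigma(T^2z,Tz)\le\sigma(z,Tz)$, while $T^2z\in\mathcal{G}$ forces $\sigma(T^2z,Tz)\ge\sigma(Tz,\mathcal{G})=\sigma(z,Tz)$; equality and uniqueness of the nearest point to $Tz$ yield $T^2z=z$. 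Finally, given two best approximations $z,z'$, set $P=\sigma(z,Tz')$ and $Q=\sigma(z',Tz)$; applying the $\psi$-contraction to the pairs $(z',Tz)$ and $(z,Tz')$ and using $T^2z=z$, $T^2z'=z'$ gives $P\le Q-\psi(Q)+\psi(\sigma(z,Tz))$ and $Q\le P-\psi(P)+\psi(\sigma(z',Tz'))$. Because $P\ge\sigma(Tz',\mathcal{G})=\sigma(z',Tz')$ and $Q\ge\sigma(Tz,\mathcal{G})=\sigma(z,Tz)$, strict monotonicity of $\psi$ forces $P=Q=\sigma(z',Tz')=\sigma(Tz',\mathcal{G})$; hence $z$ is also a nearest point to $Tz'$, and uniqueness of nearest points gives $z=z'$. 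The uniqueness then upgrades the existence statement to the ``moreover'': from every starting point $u_0$ the sequence $\{u_{2n}\}$ converges to this single best approximation.
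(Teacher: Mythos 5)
Your overall architecture --- verify the hypothesis of Lemma \ref{Cauchy} for the Picard orbit, conclude that $\{u_{2n}\}$ is Cauchy, identify the limit via the computation of Theorem \ref{cgtsub}, then prove uniqueness --- is exactly the paper's. Your uniqueness argument is correct and is in fact a slightly cleaner variant of the paper's: deriving $P=Q=\sigma(Tz',\mathcal{G})$ and then invoking uniqueness of nearest points (which, as you note, follows from property strongly UC applied to constant sequences, sharpening Proposition \ref{Proposition_semi}) avoids the paper's separate justification of the strict inequality $\sigma(z_0,Tu_0)>\sigma(Tu_0,\mathcal{G})$ and its double strict-inequality contradiction. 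The step $T^2z=z$ and the reduction of the ``moreover'' clause to uniqueness are also fine, and you are right that the limit-identification from Theorem \ref{cgtsub} survives without the boundedness hypothesis of that theorem.

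The genuine gap sits exactly where you predicted it: the uniform estimate $\lim_{k\to\infty}\sup_{l\geq k}\bigl[\sigma(u_{2l},u_{2k+1})-\sigma(u_{2k+1},\mathcal{G})\bigr]=0$ is never actually proved. What you offer is a telescoping plan together with an honest admission of its two obstructions --- that $(I-\psi)$ does not distribute over sums for nonlinear $\psi$, and that peeling off one application of $T$ at a time drives the indices back toward the start of the orbit --- and neither obstruction is then removed; boundedness of the orbit, which you propose as the fix, does not by itself make the telescoped remainder vanish. The paper closes this step with a different device, a Suzuki-type ``first failure'' selection: assuming the sup-condition fails at level $\epsilon_0$, choose $l_j$ minimal so that $\sigma(u_{2l_j-2},u_{2n_j+1})-\sigma(u_{2n_j+1},\mathcal{G})<\epsilon_0\leq\sigma(u_{2l_j},u_{2n_j+1})-\sigma(u_{2n_j+1},\mathcal{G})$; since $\sigma(u_{2l_j},u_{2l_j-2})\to 0$ by Lemma \ref{Approximationlemma} combined with property strongly UC, the offending quantity converges to exactly $\epsilon_0$, and then a \emph{single} application of the contraction inequality to the pair $(u_{2l_j},u_{2n_j+1})$ --- advancing both indices one full cycle forward rather than telescoping backward --- yields $\psi(\sigma_0+\epsilon_0)\leq\psi(\sigma_0)$, contradicting strict monotonicity of $\psi$. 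Without this (or an equivalent) idea your argument does not close, so as written the proposal has a hole at its central step even though everything surrounding that step is sound.
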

\begin{proof}
We first prove that for each $\epsilon>0,$ there exists $N_0\in \mathbb{N}$ such that 
\begin{eqnarray}\label{assertion}
\mbox{for}~~ l>n\geq N_0,~~\sigma(u_{2l}, u_{2n+1})-\sigma(u_{2n+1}, \mathcal{G})<\epsilon.
\end{eqnarray}
To prove (\ref{assertion}), assume the contrary. Then there exists $\epsilon_0>0$ such that for each $j\geq 1,$ there exists $l_j>n_j\geq j$ such that $\sigma(u_{2{l_j}-2}, u_{2{n_j}+1})-\sigma(u_{2{n_j}+1}, \mathcal{G}) < \epsilon_0$ and
\begin{eqnarray}
\sigma(u_{2l_j}, u_{2{n_j}+1})-\sigma(u_{2{n_j}+1}, \mathcal{G})&\geq &\epsilon_0. \label{line1}
\end{eqnarray}
\begin{eqnarray*}
\mbox{Then}\qquad \epsilon_0 &\leq & \sigma(u_{2l_j}, u_{2{n_j}+1})-\sigma(u_{2{n_j}+1}, \mathcal{G})\\
 &\leq & \sigma(u_{2l_j}, u_{2{l_j}-2})+ \sigma(u_{2l_j-2}, u_{2{n_j}+1})-\sigma(u_{2{n_j}+1}, \mathcal{G})\\
 &<& \sigma(u_{2l_j}, u_{2{l_j}-2})+\epsilon_0.
\end{eqnarray*} 
By Lemma \ref{Approximationlemma} and property strongly UC, we get $$\displaystyle \lim_{j\to \infty} \left[\sigma(u_{2l_j}, u_{2{n_j}+1})-\sigma(u_{2{n_j}+1}, \mathcal{G})\right]=\epsilon_0.$$ Moreover,
\begin{eqnarray*}
 && \sigma(u_{2l_j}, u_{2{n_j}+1})-\sigma(u_{2{n_j}+1}, \mathcal{G})\\
&&\leq  \sigma(u_{2l_j}, u_{2l_j+2})+ \sigma(u_{2l_j+2}, u_{2n_j+3})+\sigma(u_{2n_j+3}, u_{2n_j+1})-\sigma(u_{2{n_j}+1}, \mathcal{G})\\
&&\leq  \sigma(u_{2l_j}, u_{2l_j+2})+ \sigma(u_{2l_j+1}, u_{2n_j+2})+ \sigma(u_{2n_j+3}, u_{2n_j+1})- \sigma(u_{2{n_j}+1}, \mathcal{G})\\
&&\leq  \sigma(u_{2l_j}, u_{2l_j+2})+\sigma(u_{2l_j}, u_{2n_j+1}) -\psi(\sigma(u_{2l_j}, u_{2n_j+1}))+\psi(\sigma(u_{2n_j+1}, \mathcal{G}))\\
&& \quad +\sigma(u_{2n_j+3}, u_{2n_j+1})-\sigma(u_{2{n_j}+1}, \mathcal{G})
\end{eqnarray*}
As the sequence $\{\sigma(u_{2l_j}, u_{2n_j+1})\}$ is monotone non-increasing and bounded below, Hence it is convergent.
As discussed earlier (see the proof of Lemma \ref{Approximationlemma}), the sequence $\{\sigma(u_{2n_j+1}, \mathcal{G})\}$ converges to $\sigma_0=\displaystyle \lim_{j\to \infty} \sigma(u_{2n_j+1}, u_{2n_j}).$
Letting $j\to \infty,$ we use (\ref{line1}) to get $\epsilon_0\leq 0+\epsilon_0-\psi(\sigma_0+\epsilon_0)+\psi(\sigma_0).$ Thus $\psi(\sigma_0+\epsilon_0)\leq \psi(\sigma_0).$ This is a contradiction to the hypothesis that $\psi$ is strictly increasing. Hence (\ref{assertion}) is proved. By Lemma \ref{Cauchy}, the sequence $\{u_{2n}\}$ is Cauchy. Since $\mathcal{G}$ is complete, there exists $u_0\in \mathcal{G}$ such that $\displaystyle\lim_{n\to \infty} u_{2n}=u_0.$ Using Theorem \ref{cgtsub} we have $\sigma(u_0, Tu_0)=\sigma(Tu_0, \mathcal{G}).$ 
It should be noticed that $\sigma(Tu_0, T^2u_0)=\sigma(Tu_0, \mathcal{G}).$
 By property strongly UC, we have $T^{2}u_0=u_0.$ In fact, $T^{2n}u_0=u_0$ for all $n\geq 1.$

To see the uniqueness of the best approximation in $\mathcal{G},$ let $u_0, z_0\in \mathcal{G}$ with $u_0\neq z_0$ such that 
$\sigma(u_0, Tu_0)=\sigma(Tu_0, \mathcal{G})$ and $\sigma(z_0, Tz_0)=\sigma(Tz_0, \mathcal{G}).$
For $u_0\neq z_0,$ we have $\sigma(z_0, Tu_0)>\sigma(Tu_0, \mathcal{G}).$ Since otherwise by  property strongly UC, we would get $u_0=z_0.$ Now
\begin{eqnarray*}
\sigma(u_0, Tz_0) = \sigma(T^2u_0, Tz_0) &\leq & (I-\psi)(\sigma(z_0, Tu_0))+\psi(\sigma(Tu_0, \mathcal{G}))\\
&< & (I-\psi)(\sigma(z_0, Tu_0))+\psi(\sigma(z_0,Tu_0))\\
&=& \sigma(z_0, Tu_0).
\end{eqnarray*}
We also have $\sigma(u_0, Tz_0)>\sigma(Tz_0, \mathcal{G})$ and using this we have
 \begin{eqnarray*}
\sigma(z_0, Tu_0) = \sigma(T^2z_0, Tu_0) &\leq & (I-\psi)(\sigma(Tz_0, u_0))+\psi(\sigma(Tz_0, \mathcal{G}))\\
&< & (I-\psi)(\sigma(Tz_0, u_0))+\psi(\sigma(u_0,Tz_0))\\
&=& \sigma(u_0, Tz_0).
\end{eqnarray*}
This is absurd. Hence $u_0=z_0.$
\end{proof}
Uniqueness of the best approximation in $\mathcal{G}$ in the above theorem (Theorem \ref{maintheorem}) helps us to find a best approximation in $\mathcal{H}$ as well. In fact, $Tu_0$ is a best approximation of $T$ in $\mathcal{H}$ whenever $u_0$ is a best approximation in $\mathcal{G}.$ 
For any $v\in \mathcal{H},$ we see that $\sigma(Tu_0, \mathcal{G})\leq \sigma(Tu_0, Tv)\leq \sigma(u_0, v).$ Thus $\sigma(Tu_0, \mathcal{G})\leq \sigma(u_0, \mathcal{H}).$ 
This yields $\sigma(T^2u_0, Tu_0)=\sigma(u_0, Tu_0)=\sigma(Tu_0, \mathcal{G})\leq \sigma(u_0, \mathcal{H})=\sigma(T(Tu_0), \mathcal{H}).$ 
 \begin{remark}
 Suppose that $(\mathcal{H},\mathcal{G})$ has property strongly UC in Theorem \ref{maintheorem}.
  In connection with the above discussion, it is worth mentioning that $Tu_0$ is a unique best approximation of $T$ in $\mathcal{H}.$
    To prove this, on the contrary, suppose there is $v\in \mathcal{H}$ such that $v\neq Tu_0$ and $\sigma(v, Tv)=\sigma(Tv, \mathcal{H}).$ Since $\sigma(T^2v, Tv)=\sigma(Tv, \mathcal{H}),$ by property strongly UC of $(\mathcal{H},\mathcal{G}),$ we have $T^{2}v=v.$ Clearly, $\sigma(v,T^2u_0)> \sigma(T^2u_0, \mathcal{H}).$ 
Now we get
 \begin{eqnarray*}
 \sigma(u_0,v)=\sigma(T^4u_0, T^2v) &\leq & (I-\psi)(\sigma(T^3u_0, Tv))+\psi(\sigma(T^3u_0, \mathcal{G}))\\
 &\leq & (I-\psi)(\sigma(T^3u_0, Tv))+\psi(\sigma(T^2u_0, \mathcal{H}))\\
 &<& (I-\psi)(\sigma(T^3u_0, Tv))+\psi(\sigma(T^4u_0, T^2v))\\
 &\leq& \sigma(T^3u_0, T^3v)\\
 &\leq & \sigma(u_0, v).
 \end{eqnarray*} 
 This meets a contradiction. 
 \end{remark}
The following example illustrates Theorem \ref{maintheorem}.
\begin{example}\label{Thmex1}
Consider Example \ref{ex3}. We see that $\mathcal{G}, \mathcal{H}$ and $T$ satisfy all the hypotheses of Theorem \ref{maintheorem}. Thus $T$ has a unique best approximation in $\mathcal{G},$ which is $u=1.$ Moreover, $Tu=-1$ is a unique best approximation for $T$ in $\mathcal{H}.$ Note that $(\mathcal{G},\mathcal{H})$ and $(\mathcal{H},\mathcal{G})$ both have property strongly UC. 
\end{example}
A fixed point result can be obtained.
\begin{theorem}
Let $\mathcal{G}$ and $\mathcal{H}$ be two non-empty subsets of a metric space $(\mathcal{E},\sigma)$ such that $\mathcal{G}$ and $\mathcal{H}$ are complete and $(\mathcal{G},\mathcal{H})$ has property strongly UC. Suppose that $\mathcal{G}\cap \mathcal{H}\neq \emptyset.$ Let $T$ be an almost cyclic $\psi$-contraction on $\mathcal{G}\cup \mathcal{H}.$ For $u_0\in \mathcal{G}\cap \mathcal{H},$ define $u_{n+1}=Tu_n,~n\geq 0.$ 
Then $T$ has a unique fixed point $z$ in $\mathcal{G}\cap \mathcal{H}$ and $\{u_n\}$ converges to $z.$
\end{theorem}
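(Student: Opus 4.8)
The plan is to deduce the theorem from Theorem \ref{maintheorem} and then promote the best approximation it produces to a genuine fixed point, using the extra hypothesis $\mathcal{G}\cap\mathcal{H}\neq\emptyset$. All the hypotheses of Theorem \ref{maintheorem} are already in force here: $\mathcal{G}$ is complete, $(\mathcal{G},\mathcal{H})$ has property strongly UC, and $T$ is an almost cyclic $\psi$-contraction. Since $u_0\in\mathcal{G}\cap\mathcal{H}\subseteq\mathcal{G}$, that theorem immediately yields a unique best approximation $z\in\mathcal{G}$, i.e. $\sigma(z,Tz)=\sigma(Tz,\mathcal{G})$, together with the convergence $u_{2n}\to z$.

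The first genuine step is the observation that the whole orbit stays inside the intersection. Since $u_0\in\mathcal{G}\cap\mathcal{H}$ and $T$ is cyclic, $u_0\in\mathcal{G}$ forces $Tu_0\in\mathcal{H}$ while $u_0\in\mathcal{H}$ forces $Tu_0\in\mathcal{G}$; hence $u_1\in\mathcal{G}\cap\mathcal{H}$, and by induction $u_n\in\mathcal{G}\cap\mathcal{H}$ for every $n$. In particular the even iterates $u_{2n}$ lie in $\mathcal{H}$. Because $\mathcal{H}$ is complete it is closed in $(\mathcal{E},\sigma)$, so the limit $z$ of $\{u_{2n}\}$ belongs to $\mathcal{H}$; combined with $z\in\mathcal{G}$ this gives $z\in\mathcal{G}\cap\mathcal{H}$. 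Now $z\in\mathcal{H}$ and cyclicity give $Tz\in\mathcal{G}$, whence $\sigma(Tz,\mathcal{G})=0$, and the best-approximation identity $\sigma(z,Tz)=\sigma(Tz,\mathcal{G})=0$ then forces $Tz=z$. Thus $z$ is a fixed point lying in $\mathcal{G}\cap\mathcal{H}$.

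To obtain convergence of the full sequence I would use that an almost cyclic $\psi$-contraction is relatively nonexpansive: for $x\in\mathcal{G}$, $y\in\mathcal{H}$ one has $\sigma(y,\mathcal{G})\leq\sigma(x,y)$, so monotonicity of $\psi$ gives $\sigma(Tx,Ty)\leq(I-\psi)(\sigma(x,y))+\psi(\sigma(x,y))=\sigma(x,y)$. Applying this with $x=u_{2n}\in\mathcal{G}$ and $y=z\in\mathcal{H}$, and recalling $Tz=z$, yields $\sigma(u_{2n+1},z)=\sigma(Tu_{2n},Tz)\leq\sigma(u_{2n},z)\to 0$. Hence $u_{2n+1}\to z$ as well, and since both the even and odd subsequences converge to $z$ the entire sequence $\{u_n\}$ converges to $z$.

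Finally, for uniqueness I would note that any fixed point $w\in\mathcal{G}\cap\mathcal{H}$ is automatically a best approximation in $\mathcal{G}$, since $Tw=w\in\mathcal{G}$ gives $\sigma(w,Tw)=0=\sigma(Tw,\mathcal{G})$; as Theorem \ref{maintheorem} guarantees the best approximation in $\mathcal{G}$ is unique, we must have $w=z$. The only delicate point in the whole argument is the passage to $z\in\mathcal{H}$: it is exactly here that completeness (hence closedness) of $\mathcal{H}$ and the orbit-invariance $u_n\in\mathcal{G}\cap\mathcal{H}$ are used, and without them the best approximation supplied by Theorem \ref{maintheorem} need not satisfy $\sigma(Tz,\mathcal{G})=0$. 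Everything else is a routine consequence of results already established.
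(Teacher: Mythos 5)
Your proposal is correct and follows essentially the same route as the paper: both invoke Theorem \ref{maintheorem} for the convergence of $\{u_{2n}\}$ to the unique best approximation, use the fact that the orbit stays in $\mathcal{G}\cap\mathcal{H}$ to conclude $\sigma(Tz,\mathcal{G})=0$ and hence $Tz=z$, and use relative nonexpansiveness of $T$ to upgrade to convergence of the whole sequence. The only cosmetic difference is that you derive uniqueness from the uniqueness of the best approximation, whereas the paper reruns the contractive inequality with $\sigma(y,\mathcal{G})=0$; both are valid.
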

\begin{proof}
We see that $u_{n}\in \mathcal{G}\cap \mathcal{H}$ for all $n\geq 0.$
By Theorem \ref{maintheorem}, $\{u_{2n}\}$ is a Cauchy sequence in $\mathcal{G}\cap \mathcal{H}.$ Then there exists a  $z\in \mathcal{G}\cap \mathcal{H}$ such that $\displaystyle\lim_{n\to \infty} u_{2n}=z.$ Moreover, $\sigma(z,Tz)=\sigma(Tz, \mathcal{G})=0.$ To see the uniqueness of the fixed point, let $y\in \mathcal{G}\cap \mathcal{H}$ with $y\neq z$ such that $y=Ty.$ Then 
\begin{eqnarray*}
\sigma(z,y)= \sigma(Tz,Ty)\leq (I-\psi)(\sigma(z,y))+\psi(\sigma(y,\mathcal{G}))=(I-\psi)(\sigma(z,y))+\psi(0).
\end{eqnarray*} 
This follows $\psi(\sigma(z,y))\leq \psi(0)$ and hence $\sigma(z,y)=0.$
Since $\sigma(u_{n+1}, z)=\sigma(u_{n+1}, Tz)\leq \sigma(u_n, z),$ we say that the sequence $\{d_n\}$ given by $d_n=\sigma(u_n, z),$ $n\geq 1$ is non-increasing. Moreover, $\{d_n\}$ is bounded, and hence convergent. As $d_{2n}\to 0,$ we must say that $\{d_n\}$ converges to $0.$ This completes the proof.
\end{proof}
\begin{corollary}\label{cor:almostcyclic}
 Let $\mathcal{G}$ and $\mathcal{H}$ be two non-empty subsets of a metric space $(\mathcal{E},\sigma)$ such that $\mathcal{G}$ is complete and $(\mathcal{G},\mathcal{H})$ has property strongly UC. Let $T$ be an almost cyclic contraction on $\mathcal{G}\cup \mathcal{H}.$
 Then $T$ has a unique best approximation $u_0$ in $\mathcal{G}$ and $\{T^{2n}u\}$ converges to $u_0$ for every $u$ in $\mathcal{G}.$
\end{corollary}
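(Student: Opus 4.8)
The plan is to recognize this corollary as a direct specialization of Theorem \ref{maintheorem}, with the role of $\psi$ played by a specific linear function. First I would invoke Example \ref{ex2}: an almost cyclic contraction on $\mathcal{G}\cup\mathcal{H}$, with its constant $\beta\in[0,1)$, is an almost cyclic $\psi$-contraction for the choice $\psi(s)=(1-\beta)s$. Concretely, for this $\psi$ we have $(I-\psi)(s)=\beta s$, so the almost cyclic $\psi$-contraction inequality
\begin{eqnarray*}
\sigma(Tx,Ty)\leq (I-\psi)(\sigma(x,y))+\psi(\sigma(y,\mathcal{G}))=\beta\,\sigma(x,y)+(1-\beta)\,\sigma(y,\mathcal{G})
\end{eqnarray*}
is exactly the defining inequality of an almost cyclic contraction in Definition \ref{Def:AlmostCyclic}.

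Next I would verify that this $\psi$ meets the standing hypotheses needed to apply Theorem \ref{maintheorem}: since $\beta\in[0,1)$ we have $1-\beta>0$, so $\psi(s)=(1-\beta)s$ is a strictly increasing continuous map from $[0,\infty)$ to $[0,\infty)$. With $\mathcal{G}$ complete and $(\mathcal{G},\mathcal{H})$ having property strongly UC already assumed, all the hypotheses of Theorem \ref{maintheorem} are in force.

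Then I would simply quote the conclusion of Theorem \ref{maintheorem}: $T$ has a unique best approximation $u_0$ in $\mathcal{G}$, and for any starting point $u\in\mathcal{G}$, the even iterates $\{u_{2n}\}$ with $u_{n+1}=Tu_n$ converge to a best approximation in $\mathcal{G}$. The only extra observation required is that the limit is independent of the starting point: since the best approximation in $\mathcal{G}$ is unique by the first part of the conclusion, every such sequence $\{T^{2n}u\}$ must converge to that same point $u_0$. This yields the stated convergence of $\{T^{2n}u\}$ to $u_0$ for every $u\in\mathcal{G}$.

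I do not expect any genuine obstacle here, as the corollary is a clean instantiation rather than a new argument; the only point demanding a word of care is the identification of $I-\psi$ and $\psi$ with the $\beta$ and $1-\beta$ coefficients so that the two contraction notions line up, together with noting that the \emph{uniqueness} conclusion of Theorem \ref{maintheorem} is what forces the orbit-limit to be the same $u_0$ regardless of the chosen initial point $u$.
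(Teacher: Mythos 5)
Your proposal is correct and matches the paper's (implicit) argument exactly: the corollary is obtained by noting, as in Example \ref{ex2}, that an almost cyclic contraction is an almost cyclic $\psi$-contraction with $\psi(s)=(1-\beta)s$, which is strictly increasing and continuous, and then applying Theorem \ref{maintheorem}. Your additional remark that uniqueness of the best approximation forces $\{T^{2n}u\}$ to converge to the same $u_0$ for every starting point $u$ is the right way to read off the stated convergence.
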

\begin{corollary}\label{cor:cyclic}
Let $\mathcal{G}$ and $\mathcal{H}$ be two non-empty subsets of a metric space $(\mathcal{E},\sigma)$ such that $\mathcal{G}$ is complete and $(\mathcal{G},\mathcal{H})$ has property strongly UC. Let $T$ be a cyclic contraction on $\mathcal{G}\cup \mathcal{H}.$ Then there exists a unique $u\in \mathcal{G}$ such that $\sigma(u, Tu)=\sigma(\mathcal{G},\mathcal{H}).$
\end{corollary}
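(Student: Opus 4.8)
The plan is to reduce Corollary~\ref{cor:cyclic} to Theorem~\ref{maintheorem} and then upgrade the best approximation produced there to a best proximity point, using the stronger contraction inequality available for a cyclic contraction. By Example~\ref{ex1}, a cyclic contraction $T$ with constant $\beta\in[0,1)$ is an almost cyclic $\psi$-contraction for $\psi(s)=(1-\beta)s$. Hence Theorem~\ref{maintheorem} at once yields a unique best approximation $u\in\mathcal{G}$, namely $\sigma(u,Tu)=\sigma(Tu,\mathcal{G})$, and moreover the even iterates $u_{2n}$ of the sequence $u_{n+1}=Tu_n$ (with $u_0\in\mathcal{G}$ arbitrary) converge to $u$.

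First I would show that, for a cyclic contraction, the limit $\sigma_0=\lim_n\sigma(u_{2n},u_{2n+1})$ (whose existence was established in the proof of Theorem~\ref{cgtsub}) equals $D:=\sigma(\mathcal{G},\mathcal{H})$. Writing $d_n=\sigma(u_n,u_{n+1})$, the defining inequality of a cyclic contraction gives
\[
d_{n+1}=\sigma(Tu_n,Tu_{n+1})\le \beta d_n+(1-\beta)D,
\]
so that $d_{n+1}-D\le\beta(d_n-D)$ and therefore $d_n\to D$; in particular $\sigma_0=D$. This is the one place where the full cyclic contraction hypothesis is used, and it is exactly what separates a best proximity point from a mere best approximation.

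Next I would invoke inequality~(\ref{inequality1}) from the proof of Theorem~\ref{cgtsub}, applied to the subsequential limit $z=u$. That inequality reads $\sigma(u,Tu)\le\sigma_0=D$, while $u\in\mathcal{G}$ and $Tu\in\mathcal{H}$ make the reverse bound $\sigma(u,Tu)\ge D$ automatic. Hence $\sigma(u,Tu)=\sigma(\mathcal{G},\mathcal{H})$, which establishes existence.

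For uniqueness I would observe that any $w\in\mathcal{G}$ with $\sigma(w,Tw)=D$ is forced to be a best approximation: since $w\in\mathcal{G}$ we have $\sigma(Tw,\mathcal{G})\le\sigma(Tw,w)=D$, whereas $Tw\in\mathcal{H}$ gives $\sigma(Tw,\mathcal{G})\ge D$, so $\sigma(w,Tw)=\sigma(Tw,\mathcal{G})$. The uniqueness of the best approximation in Theorem~\ref{maintheorem} then forces $w=u$. I expect the main obstacle to be conceptual rather than computational, namely recognizing that the cyclic contraction inequality is precisely what drives $\sigma_0$ down to $\sigma(\mathcal{G},\mathcal{H})$; once that is in hand, both existence and uniqueness follow from short comparison arguments.
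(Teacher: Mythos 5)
Your proof is correct, and at the top level it follows the same reduction as the paper: view the cyclic contraction as an almost cyclic $\psi$-contraction with $\psi(s)=(1-\beta)s$, obtain the unique best approximation $u\in\mathcal{G}$ from Theorem \ref{maintheorem} (equivalently Corollary \ref{cor:almostcyclic}), and then use the extra $(1-\beta)\sigma(\mathcal{G},\mathcal{H})$ term in the cyclic contraction inequality to push $\sigma(u,Tu)$ down to $\sigma(\mathcal{G},\mathcal{H})$. The difference lies in how that last upgrade is executed. You route it through the iterates: the geometric recursion $d_{n+1}-\sigma(\mathcal{G},\mathcal{H})\le\beta\bigl(d_n-\sigma(\mathcal{G},\mathcal{H})\bigr)$ gives $\sigma_0=\sigma(\mathcal{G},\mathcal{H})$, and then inequality (\ref{inequality1}) yields $\sigma(u,Tu)\le\sigma_0$. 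The paper instead applies the contraction inequality once at the point $u$ itself: $\sigma(u,Tu)=\sigma(Tu,\mathcal{G})\le\sigma(Tu,T^2u)\le\beta\,\sigma(u,Tu)+(1-\beta)\sigma(\mathcal{G},\mathcal{H})$, which forces $\sigma(u,Tu)\le\sigma(\mathcal{G},\mathcal{H})$ immediately, with no reference to the orbit. Both are valid; the paper's is shorter and avoids citing (\ref{inequality1}), which formally lives inside the proof of Theorem \ref{cgtsub} where the sets are assumed bounded --- as your argument implicitly relies on, the derivation of (\ref{inequality1}) does not actually use boundedness, but it is worth being explicit about that. Your uniqueness step (any $w$ with $\sigma(w,Tw)=\sigma(\mathcal{G},\mathcal{H})$ is automatically a best approximation, hence $w=u$) is cleaner and more explicit than the paper's terse appeal to property strongly UC.
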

\begin{proof}
From Corollary \ref{cor:almostcyclic}, there exists a unique $u\in \mathcal{G}$ such that $\sigma(u,Tu)=\sigma(Tu,\mathcal{G}).$ By the property of cyclic contraction, we have
\begin{eqnarray*}
\|u-Tu\|=\sigma(Tu,\mathcal{G})\leq \|Tu- T^2u\|\leq \beta \|u-Tu\|+(1-\beta) \sigma(\mathcal{G},\mathcal{H}),
\end{eqnarray*}
which forces $\|u-Tu\|=\sigma(\mathcal{G},\mathcal{H}).$  
As $(\mathcal{G},\mathcal{H})$ has property strongly UC, it is easy to see that such a point $u$ is unique.
\end{proof}
We can now clearly see that Theorem \ref{Thm:Eldred2006} and Theorem \ref{Thm:Basha2021} are particular cases of Corollary \ref{cor:cyclic} and Corollary \ref{cor:almostcyclic} respectively.

\section{On almost cyclic contractions}\label{Section4}

In this section, we study continuity property in a metric space and the existence of best approximations in a reflexive Banach space of an almost cyclic contraction. 
It is known that every almost cyclic contraction is not necessarily continuous (\cite{Basha2021}). Here we provide sufficient conditions under which an almost cyclic contraction is continuous.
\begin{theorem}\label{Thm:continuity}
Suppose $(\mathcal{G},\mathcal{H})$ is a non-empty proximal pair of closed convex subsets in a strictly convex Banach space $\mathcal{E}.$ 
Then every relatively nonexpansive map $T$ on $\mathcal{G}\cup \mathcal{H}$ is continuous.  
\end{theorem}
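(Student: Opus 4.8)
The plan is to first extract the rigid geometric structure that strict convexity forces on a proximal pair of closed convex sets, and then read continuity of $T$ off from relative nonexpansiveness. Since $\mathcal E$ is a metric space it suffices to prove sequential continuity, and by the symmetry of the hypotheses I treat a sequence $u_n\to u$ in $\mathcal G$ and aim to show $Tu_n\to Tu$. Throughout write $d=\sigma(\mathcal G,\mathcal H)$.

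First I would show that $(\mathcal G,\mathcal H)$ is sharp proximal with \emph{constant displacement}. Given $u\in\mathcal G$ and two points $v_1,v_2\in\mathcal H$ with $\|u-v_1\|=\|u-v_2\|=d$, convexity places $\tfrac{v_1+v_2}{2}\in\mathcal H$, so its distance to $u$ is at least $d$, while the triangle inequality makes it at most $d$; strict convexity then forces $v_1=v_2$. Thus each $u\in\mathcal G$ has a unique $u'\in\mathcal H$ with $\|u-u'\|=d$, and symmetrically. Applying the same midpoint/strict-convexity argument to two pairs $(u_1,u_1')$ and $(u_2,u_2')$ — the midpoints $\tfrac{u_1+u_2}{2}\in\mathcal G$ and $\tfrac{u_1'+u_2'}{2}\in\mathcal H$ are again at distance exactly $d$ — yields $u_1-u_1'=u_2-u_2'$. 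Hence there is a fixed vector $g$ with $\|g\|=d$ such that $u'=u-g$ for all $u\in\mathcal G$ and $v'=v+g$ for all $v\in\mathcal H$; in particular $\mathcal H=\mathcal G-g$ and priming is an isometric translation.

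Next I would record how $T$ interacts with priming. For $u\in\mathcal G$ relative nonexpansiveness gives $\|Tu-Tu'\|\le\|u-u'\|=d$, while $Tu\in\mathcal H$ and $Tu'\in\mathcal G$ force $\|Tu-Tu'\|\ge d$; equality together with uniqueness of nearest points gives $(Tu)'=Tu'$, i.e. $T(u-g)=Tu+g$ for every $u\in\mathcal G$. Feeding this back into relative nonexpansiveness I compare $u_1,u_2\in\mathcal G$ by pairing $u_1$ with $u_2'=u_2-g$ and $u_2$ with $u_1'=u_1-g$; writing $x=Tu_1-Tu_2$ and $y=u_1-u_2$, this produces the two estimates $\|x-g\|\le\|y+g\|$ and $\|x+g\|\le\|y-g\|$.

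Finally I would specialize to $u_1=u_n,\ u_2=u$, so $y=u_n-u\to 0$ and $x=x_n=Tu_n-Tu$. The two estimates give $\limsup_n\|x_n-g\|\le d$ and $\limsup_n\|x_n+g\|\le d$, while $\|x_n-g\|+\|x_n+g\|\ge\|2g\|=2d$ forces $\|x_n-g\|\to d$ and $\|x_n+g\|\to d$. Equivalently, $Tu_n$ lies asymptotically in both balls $\overline B(Tu+g,d)$ and $\overline B(Tu-g,d)$, whose centers are $2d=d+d$ apart; since the two radii sum to the distance between the centers, strict convexity makes their intersection the single tangency point $Tu$ (the midpoint of the centers). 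Pushing this tangency through the limit yields $x_n\to 0$, i.e. $Tu_n\to Tu$, which is the desired continuity. I expect this last passage — upgrading the two norm-convergences $\|x_n\mp g\|\to d$ to strong convergence $x_n\to 0$ — to be the main obstacle, since it is precisely here that strict convexity (rather than mere uniqueness of nearest points) must be invoked; the clean way to close it is to argue by contradiction along a subsequence on which $\|x_n\|$ stays bounded away from $0$ and to violate the strict-convexity characterization of the unique common point of the two tangent balls.
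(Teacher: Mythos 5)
Your structural reduction is correct and is genuinely different from, and in some ways sharper than, the paper's argument: the midpoint/strict-convexity computation showing that a proximal pair of closed convex sets is sharp proximal with \emph{constant} displacement (so $\mathcal H=\mathcal G-g$ and priming is an isometric translation), and the observation that $\|Tu-Tu'\|=\sigma(\mathcal G,\mathcal H)$ forces $(Tu)'=Tu'$, are both valid and correctly yield the two estimates $\|x-g\|\le\|y+g\|$ and $\|x+g\|\le\|y-g\|$. The paper instead fixes the unique $u'\in\mathcal H$ with $\|u-u'\|=\sigma(\mathcal G,\mathcal H)$, notes $\|Tu_n-Tu'\|\to\sigma(\mathcal G,\mathcal H)=\|Tu-Tu'\|$, and, assuming $\|Tu_{n_j}-Tu\|\ge\varepsilon_0$, argues that the closed convex hull $K=\overline{co}\{Tu_{n_j},Tu\colon j\ge1\}$ satisfies $\delta(Tu',K)>\sigma(Tu',K)$ because a nondegenerate convex set cannot lie on a sphere of a strictly convex space. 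Your translation picture is cleaner, but both arguments funnel into the same final obstruction.

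That final step is a genuine gap, and you have located it but not closed it. From $\|x_n-g\|\to d$ and $\|x_n+g\|\to d$ with $\|g\|=d$, strict convexity gives only the exact statement $\overline B(g,d)\cap\overline B(-g,d)=\{0\}$; it says nothing about the intersection of the $\varepsilon$-enlargements of the two tangent balls, and in an infinite-dimensional strictly convex space that is not uniformly convex this intersection can contain points of norm bounded away from $0$ for every $\varepsilon>0$. The implication you need, $\|g\pm x_n\|\to\|g\|\Rightarrow x_n\to0$, is precisely midpoint local uniform convexity of the norm at $g$, which strict convexity does not imply. Your proposed contradiction along a subsequence with $\|x_n\|\ge\varepsilon_0$ therefore does not terminate: there is no compactness with which to extract a convergent subsequence of $x_n$, and no modulus of convexity quantifying how far such an $x_n$ must push $\max(\|x_n+g\|,\|x_n-g\|)$ above $d$. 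To close the argument you need an estimate of the form $\max(\|z+g\|,\|z-g\|)\ge d+\eta(\|z\|)$ with $\eta>0$ on $(0,\infty)$, which is available under uniform convexity, compactness of $\mathcal H$, or finite dimensionality, but not under strict convexity alone. (The same qualitative-versus-quantitative issue is buried in the paper's own last step, which asserts a subsequence realizing $\delta(Tu',K)$ even though every subsequence of $\|Tu'-Tu_{n_j}\|$ converges to $\sigma(\mathcal G,\mathcal H)<\delta(Tu',K)$ and the supremum may simply be attained at an early index.)
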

\begin{proof}
Let $\{u_n\}$ be a sequence and $u$ an element in $\mathcal{G}$ such that $u_n\to u.$ Since $(\mathcal{G},\mathcal{H})$ is proximal and $\mathcal{E}$ is strictly convex, there exists a unique $u'$ in $\mathcal{H}$ such that $\|u-u'\|=\sigma(\mathcal{G},\mathcal{H})=\sigma(u', \mathcal{G}).$ It is clear that $\|u_n-u'\|\to \sigma(\mathcal{G},\mathcal{H})$ and using the fact that $T$ is relatively nonexpansive, we have $\|Tu_n-Tu'\|\to \sigma(\mathcal{G},\mathcal{H}).$ 
We claim that $Tu_n\to Tu.$ If not, then there exist an $\epsilon_0>0$ and a subsequence $\{n_j\}$ such that 
$\|Tu_{n_j}-Tu\|\geq \epsilon_0 ~~\mbox{for}~~ j\geq 1.$
Set $H=\{Tu'\}$ and $K=\overline{co}\{Tu_{n_j}, Tu: j\geq 1\}.$ By strict convexity, $\delta(H,K)>\sigma(H,K).$ But $\delta(H,K)=\delta(Tu', K)=\delta\left(Tu', \{Tu_{n_j}, Tu: j\geq 1\}\right)$ and $\sigma(H,K)=\sigma(\mathcal{G},\mathcal{H}).$ There exists a subsequence $\{n_{j_l}\}$ such that $\|Tu'-Tu_{n_{j_l}}\|\to \delta(H,K)$ as $l\to \infty.$ This is a contradiction.
\end{proof}
As a corollary we have the continuity of an almost cyclic contraction:
\begin{theorem}\label{Thm:continuity2}
Suppose $(\mathcal{G},\mathcal{H})$ is a non-empty proximal pair of closed convex subsets in a strictly convex Banach space $\mathcal{E}.$  
Then every almost cyclic contraction $T$ on $\mathcal{G}\cup \mathcal{H}$ is continuous.  
\end{theorem}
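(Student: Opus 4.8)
The plan is to reduce this statement to Theorem \ref{Thm:continuity}, which already establishes continuity for the broader class of relatively nonexpansive maps on exactly such a pair. The only thing I would need to verify is that an almost cyclic contraction sits inside that class, a fact already flagged in the introduction when it was noted that ``an almost cyclic contraction is relatively nonexpansive.''

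First I would recall the defining inequality of an almost cyclic contraction: for $u\in\mathcal{G}$ and $v\in\mathcal{H}$ there is $\beta\in[0,1)$ with $\sigma(Tu,Tv)\le\beta\,\sigma(u,v)+(1-\beta)\,\sigma(v,\mathcal{G})$. Since $u\in\mathcal{G}$, the definition of $\sigma(v,\mathcal{G})$ as an infimum immediately gives $\sigma(v,\mathcal{G})\le\sigma(u,v)$. Substituting this bound into the right-hand side yields $\sigma(Tu,Tv)\le\beta\,\sigma(u,v)+(1-\beta)\,\sigma(u,v)=\sigma(u,v)$, so $T$ is relatively nonexpansive. This elementary step is the entire bridge between the two classes of maps.

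With that in hand, the hypotheses of Theorem \ref{Thm:continuity} are met verbatim: $(\mathcal{G},\mathcal{H})$ is a non-empty proximal pair of closed convex subsets of a strictly convex Banach space $\mathcal{E}$, and $T$ is relatively nonexpansive. Invoking that theorem then gives continuity of $T$ at once, finishing the argument.

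There is essentially no obstacle at this stage, since all of the genuine analytic work—the use of proximality to produce the unique proximal partner, strict convexity, and the diameter-versus-distance ($\delta(H,K)>\sigma(H,K)$) argument applied to the Chebyshev-type configuration $H=\{Tu'\}$, $K=\overline{\mathrm{co}}\{Tu_{n_j},Tu\}$—has already been carried out in the proof of Theorem \ref{Thm:continuity}. The present statement is a direct specialization, so the only care required is in confirming the inequality $\sigma(v,\mathcal{G})\le\sigma(u,v)$ that places almost cyclic contractions within the relatively nonexpansive class.
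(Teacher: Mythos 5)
Your proposal is correct and is exactly the route the paper intends: Theorem \ref{Thm:continuity2} is stated there as a corollary of Theorem \ref{Thm:continuity}, relying on the fact (noted in the introduction) that an almost cyclic contraction is relatively nonexpansive. Your explicit verification of that inclusion via $\sigma(v,\mathcal{G})\le\sigma(u,v)$ for $u\in\mathcal{G}$ is the only step needed and it is sound.
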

It is known (\cite{Raju2015}, Lemma 4.2) that if $(\mathcal{G},\mathcal{H})$ is a pair in a metric space $(\mathcal{E},\sigma)$ such that $\mathcal{G}$ is compact and $\mathcal{H}$ is approximatively compact, then $\mathcal{G}$ and $\mathcal{H}$ are non-empty compact subsets of $\mathcal{E}.$ Thus, the following theorem is obvious.
\begin{theorem}\label{uniform continuity}
Suppose $(\mathcal{G},\mathcal{H})$ is a non-empty proximal pair of convex subsets in a strictly convex Banach space $\mathcal{E}$ such that $\mathcal{G}$ is compact and $\mathcal{H}$ is approximatively compact. Then every almost cyclic contraction $T$ on $\mathcal{G}_0\cup \mathcal{H}_0$ is uniformly continuous.
\end{theorem}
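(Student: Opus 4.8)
The plan is to reduce the statement to the classical Heine--Cantor theorem once two earlier ingredients have been assembled. Since $(\mathcal{G},\mathcal{H})$ is proximal, we have $\mathcal{G}_0=\mathcal{G}$ and $\mathcal{H}_0=\mathcal{H}$, so the domain $\mathcal{G}_0\cup \mathcal{H}_0$ coincides with $\mathcal{G}\cup \mathcal{H}$ and $T$ is an almost cyclic contraction on the whole pair. First I would invoke Lemma~4.2 of \cite{Raju2015}: because $\mathcal{G}$ is compact and $\mathcal{H}$ is approximatively compact, both $\mathcal{G}$ and $\mathcal{H}$ are non-empty compact subsets of $\mathcal{E}$. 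In particular they are closed, and they are convex by assumption, so the pair $(\mathcal{G},\mathcal{H})$ meets every hypothesis of Theorem~\ref{Thm:continuity2}.

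Next I would apply Theorem~\ref{Thm:continuity2} to conclude that $T$ is continuous on $\mathcal{G}\cup \mathcal{H}$. Since a finite union of compact sets is compact, $\mathcal{G}\cup \mathcal{H}$ is a compact metric space. A continuous map from a compact metric space into a metric space is automatically uniformly continuous (Heine--Cantor); applying this to $T$ on $\mathcal{G}\cup \mathcal{H}$ yields the claimed uniform continuity.

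There is essentially no genuine obstacle here, which is why the statement is flagged as obvious: the substance has already been packaged into the cited compactness lemma and into Theorem~\ref{Thm:continuity2}. The only points that require a moment's care are the bookkeeping with the proximality hypothesis, so that the map considered in Theorem~\ref{Thm:continuity2} (defined on $\mathcal{G}\cup\mathcal{H}$) is literally the map of the present statement (defined on $\mathcal{G}_0\cup\mathcal{H}_0$), and the observation that compactness supplies the closedness needed to quote Theorem~\ref{Thm:continuity2}.
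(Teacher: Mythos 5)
Your proof is correct and follows exactly the route the paper intends: the cited Lemma~4.2 of \cite{Raju2015} (together with proximality, which identifies $\mathcal{G}_0\cup\mathcal{H}_0$ with $\mathcal{G}\cup\mathcal{H}$) yields a compact domain, Theorem~\ref{Thm:continuity2} yields continuity, and Heine--Cantor upgrades this to uniform continuity. The paper leaves these steps implicit by declaring the theorem ``obvious,'' so your write-up is simply the spelled-out version of the same argument.
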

 A metric version of Theorem \ref{Thm:continuity} can also be obtained.
\begin{theorem}\label{Thm:continuity_UC}
Suppose $(\mathcal{G},\mathcal{H})$ is a pair of non-empty closed subsets of a metric space $(\mathcal{E},\sigma)$ such that $(\mathcal{G},\mathcal{H})$ and $(\mathcal{H},\mathcal{G})$ has property UC. Then every almost cyclic contraction $T$ on $\mathcal{G}\cup \mathcal{H}$ is continuous.
\end{theorem}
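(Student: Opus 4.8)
The plan is to transcribe the proof of Theorem~\ref{Thm:continuity} into the metric setting, using property UC for the pair $(\mathcal{H},\mathcal{G})$ in place of the strict-convexity (convex-hull) argument. Because an almost cyclic contraction is relatively nonexpansive and both $(\mathcal{G},\mathcal{H})$ and $(\mathcal{H},\mathcal{G})$ have property UC, the hypotheses are symmetric in $\mathcal{G}$ and $\mathcal{H}$, so it is enough to check sequential continuity at a point $u\in\mathcal{G}$; the case $u\in\mathcal{H}$ is identical after swapping the two sets. Accordingly I fix $\{u_n\}\subseteq\mathcal{G}$ with $u_n\to u$ and try to show $Tu_n\to Tu$, keeping in mind that $Tu_n,Tu\in\mathcal{H}$ while the anchor I hand to property UC will lie in $\mathcal{G}$.

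The heart of the argument is to locate a companion point $u'\in\mathcal{H}$ with $\sigma(u,u')=\sigma(\mathcal{G},\mathcal{H})$, so that $Tu'\in\mathcal{G}$ becomes a common anchor. Granting such a $u'$, continuity of the distance in its first slot gives $\sigma(u_n,u')\to\sigma(u,u')=\sigma(\mathcal{G},\mathcal{H})$. Relative nonexpansiveness applied to the pair $(u_n,u')\in\mathcal{G}\times\mathcal{H}$ yields $\sigma(Tu_n,Tu')\le\sigma(u_n,u')$, and since $Tu_n\in\mathcal{H}$, $Tu'\in\mathcal{G}$ we also have $\sigma(Tu_n,Tu')\ge\sigma(\mathcal{G},\mathcal{H})$; squeezing gives $\sigma(Tu_n,Tu')\to\sigma(\mathcal{G},\mathcal{H})$. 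The same two inequalities applied to the fixed pair $(u,u')$ give $\sigma(Tu,Tu')=\sigma(\mathcal{G},\mathcal{H})$.

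I would then feed this into property UC for $(\mathcal{H},\mathcal{G})$: take the two $\mathcal{H}$-sequences $\{Tu_n\}$ and the constant sequence $Tu$, tested against the constant $\mathcal{G}$-sequence $Tu'$. Since $\sigma(Tu_n,Tu')\to\sigma(\mathcal{H},\mathcal{G})$ and $\sigma(Tu,Tu')=\sigma(\mathcal{H},\mathcal{G})$, property UC forces $\sigma(Tu_n,Tu)\to 0$, that is $Tu_n\to Tu$. This is precisely the function discharged by strict convexity in Theorem~\ref{Thm:continuity}: property UC turns ``two points lying at the minimal distance from a common anchor'' into ``the two points merge in the limit.''

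The step I expect to be the genuine obstacle is the first one, namely producing the partner $u'$ realizing $\sigma(u,u')=\sigma(\mathcal{G},\mathcal{H})$, i.e.\ certifying $u\in\mathcal{G}_0$. Property UC only disciplines sequences whose distances already tend to $\sigma(\mathcal{G},\mathcal{H})$; it does not by itself manufacture minimizers, so at a point $u$ with $\sigma(u,Tu)>\sigma(\mathcal{G},\mathcal{H})$ there need be no anchor at the minimal distance available. In Theorem~\ref{Thm:continuity} this existence is exactly what proximality provides (with strict convexity ensuring uniqueness). I would therefore run the argument with $(\mathcal{G},\mathcal{H})$ assumed proximal, the natural metric counterpart of the hypotheses of Theorem~\ref{Thm:continuity}, so that $\mathcal{G}=\mathcal{G}_0$ and $\mathcal{H}=\mathcal{H}_0$; by Proposition~\ref{UCtoSUC} the pair then also has property strongly UC, and the anchors $u'$ exist for every $u$, closing the argument in both the $\mathcal{G}$ and the $\mathcal{H}$ cases.
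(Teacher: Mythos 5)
Your argument is essentially the paper's own proof: the paper likewise picks the companion point $u'\in\mathcal{H}$ with $\sigma(u,u')=\sigma(\mathcal{G},\mathcal{H})$, observes via relative nonexpansiveness that $\sigma(Tu_n,Tu')\le\sigma(u_n,u')\to\sigma(\mathcal{G},\mathcal{H})$ while $\sigma(Tu,Tu')=\sigma(\mathcal{G},\mathcal{H})$, and then applies property UC of $(\mathcal{H},\mathcal{G})$ with anchor $Tu'\in\mathcal{G}$ to conclude $\sigma(Tu_n,Tu)\to 0$. The obstacle you single out is genuine and is \emph{not} resolved in the paper: the hypotheses of Theorem~\ref{Thm:continuity_UC} (closedness plus property UC of both orderings) do not guarantee that the infimum $\sigma(\mathcal{G},\mathcal{H})$ is attained from $u$, i.e.\ that $u\in\mathcal{G}_0$, yet the printed proof writes $u'$ and $Tu'$ without comment; property UC yields at most uniqueness of such a partner, never existence. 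Your repair --- assuming $(\mathcal{G},\mathcal{H})$ proximal, the metric analogue of the hypotheses of Theorem~\ref{Thm:continuity} --- is exactly what is needed to make the statement and this proof cohere (the further remark about Proposition~\ref{UCtoSUC} and property strongly UC is harmless but not needed here, since the argument only uses UC of $(\mathcal{H},\mathcal{G})$ together with the existence of $u'$). With that hypothesis added, your write-up is complete and coincides with the paper's.
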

\begin{proof}
Let $\{u_n\}$ be a sequence in $\mathcal{G}$ such that it converges to $u$ in $\mathcal{G}.$ Since $T$ is almost cyclic contraction, it is relatively nonexpansive and hence
\begin{eqnarray*}\label{eqn:continuity}
\sigma(Tu_n, Tu')\leq \sigma(u_n, u')\leq \sigma(u_n, u)+\sigma(u,u')\to \sigma(\mathcal{G},\mathcal{H}).
\end{eqnarray*} 
But $\sigma(Tu, Tu')=\sigma(\mathcal{G},\mathcal{H}).$ By property UC of $(\mathcal{H},\mathcal{G})$, we get $\sigma(Tu_n,Tu)$ $\to 0. $
\end{proof}

The following result gives a flavour to establish the best approximation results for almost cyclic contractions in the setting of a reflexive Banach space.
\begin{theorem}\label{Additional1}
Suppose that $\mathcal{G}$ and $\mathcal{H}$ are two non-empty subsets of a reflexive Banach space $\mathcal{E}$ such that $\mathcal{G}$ is weakly closed and $\mathcal{H}$ is compact. Assume that $T:\mathcal{G}\cup \mathcal{H}\to \mathcal{G}\cup \mathcal{H}$ is an almost cyclic contraction.
Then there exists $(u, v)\in \mathcal{G}\times \mathcal{H}$ such that $\|u-v\|=\sigma(v, \mathcal{G}).$ Moreover, for any $u_0\in \mathcal{G}$ and $u_{n+1}=Tu_n,~n\geq 0,~x=\lim_{n\to \infty} u_{2n}$ and $v= \lim_{n\to \infty} u_{2n+1}.$
\end{theorem}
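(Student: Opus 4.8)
The plan is to feed the orbit into Lemma~\ref{Approximationlemma} and then extract limits by a weak-compactness argument. Since an almost cyclic contraction is an almost cyclic $\psi$-contraction with $\psi(s)=(1-\beta)s$ (Example~\ref{ex2}) and is in particular relatively nonexpansive, Lemma~\ref{Approximationlemma} applies verbatim to the orbit $\{u_n\}$ given by $u_{n+1}=Tu_n$. Thus $\{\sigma(u_{2n},u_{2n+1})\}$ and $\{\sigma(u_{2n+1},\mathcal{G})\}$ both converge to a common limit, say $\sigma_0$. I would first record that $\{u_{2n}\}$ is bounded: the sequence $\{\sigma(u_{2n},u_{2n+1})\}$ is non-increasing, hence bounded, and $\{u_{2n+1}\}\subseteq\mathcal{H}$ is bounded because $\mathcal{H}$ is compact, so $\|u_{2n}\|\leq \sigma(u_{2n},u_{2n+1})+\|u_{2n+1}\|$ stays bounded.

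Next I would carry out the extraction. By compactness of $\mathcal{H}$ there is a subsequence along which $u_{2n_k+1}\to v\in\mathcal{H}$ in norm; by reflexivity of $\mathcal{E}$ together with boundedness of $\{u_{2n}\}$, I pass to a further subsequence so that $u_{2n_k}\xrightarrow{w}u$, and $u\in\mathcal{G}$ because $\mathcal{G}$ is weakly closed. Put $x:=u$. To identify the pair, note that $y\mapsto\sigma(y,\mathcal{G})$ is $1$-Lipschitz, so $\sigma(v,\mathcal{G})=\lim_k\sigma(u_{2n_k+1},\mathcal{G})=\sigma_0$. Since $u_{2n_k+1}\to v$ strongly while $u_{2n_k}\xrightarrow{w}u$, we have $u_{2n_k}-u_{2n_k+1}\xrightarrow{w}u-v$, and weak lower semicontinuity of the norm yields $\|u-v\|\leq\liminf_k\|u_{2n_k}-u_{2n_k+1}\|=\sigma_0$. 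As $u\in\mathcal{G}$ forces $\sigma(v,\mathcal{G})\leq\|u-v\|$, the three quantities coincide: $\|u-v\|=\sigma(v,\mathcal{G})=\sigma_0$. This establishes the existence assertion, with $(x,v)=(u,v)$ the limits of the extracted even and odd iterates.

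The subtle point, and the step I expect to require the most care, is the interpretation and justification of the limit statement. Strong (not merely weak) compactness of $\mathcal{H}$ is indispensable here: it is precisely what upgrades the weak inequality $\sigma(v,\mathcal{G})\le\sigma_0$ to the equality $\sigma(v,\mathcal{G})=\sigma_0$ through continuity of the distance function, and hence what pins down $\|u-v\|=\sigma(v,\mathcal{G})$. By contrast the even iterates are controlled only in the weak topology, and since no Kadec--Klee-type condition is imposed, ``$x=\lim_{n}u_{2n}$'' should be read as a weak limit. Finally, because property strongly UC is not assumed, the best-approximation pair need not be unique and $T$ need not be continuous on $\mathcal{H}$ (the continuity results of Section~\ref{Section4} rest on strict convexity or property UC); consequently the convergence is obtained along the subsequence extracted above rather than for the full orbit, and making this last point precise is the main difficulty.
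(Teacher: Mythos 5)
Your proof is correct and follows essentially the same route as the paper: compactness of $\mathcal{H}$ gives norm convergence of a subsequence of odd iterates, reflexivity plus boundedness gives weak convergence of a further subsequence of even iterates, Lemma~\ref{Approximationlemma} identifies $\lim_k\|u_{2n_k}-u_{2n_k+1}\|$ with $\sigma(v,\mathcal{G})$, and weak lower semicontinuity of the norm (which the paper implements by choosing a Hahn--Banach norming functional for $u-v$) yields $\|u-v\|\le\sigma(v,\mathcal{G})$. Your direct boundedness argument, your explicit note that $u\in\mathcal{G}$ supplies the reverse inequality, and your reading of the ``moreover'' clause as subsequential (weak, for the even iterates) convergence are minor gains in completeness over the paper's version, which cites Proposition 3.3 of \cite{Basha2021} for boundedness and leaves those last two points implicit.
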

\begin{proof}
For $u_0\in \mathcal{G},$ define $u_{n+1}=Tu_n,~n\geq 0.$ By Proposition 3.3 of \cite{Basha2021}, the sequence $\{u_n\}$ is bounded. Then there exists $u\in \mathcal{G}$ and $v\in \mathcal{H}$ such that $u_{2n_k}\to u$ weakly and $u_{2n_k+1}\to v$. Thus $\sigma(u_{2n_k+1}, \mathcal{G})\to \sigma(v, \mathcal{G})$ and hence $\|u_{2n_k}-u_{2n_k+1}\|\to \sigma(v, \mathcal{G}).$ Also, $u_{2n_k}-u_{2n_k+1}\to u-v\neq 0$ weakly. By Hahn-Banach thereom, there exists $u^*\in \mathcal{E}^*$ such that $\|u^*\|=1$ and $u^*(u-v)=\|u-v\|.$ Since
$u^*(u_{2n_k}-u_{2n_k+1})\to u^*(u-v)=\|u-v\|,$ we have
\begin{eqnarray*}
\|u-v\|=\lim_{k\to \infty} |u^*(u_{2n_k}-u_{2n_k+1})|\leq \lim_{k\to \infty} \|u_{2n_k}-u_{2n_k+1}\|= \sigma(v,\mathcal{G}). 
\end{eqnarray*}
This proves that $\|u-v\|=\sigma(v, \mathcal{G}).$
\end{proof} 
The following result easily follows from Theorem \ref{Additional1}.
\begin{theorem}\label{Additional2}
Suppose $\mathcal{G}$ and $\mathcal{H}$ are two non-empty subsets of a reflexive Banach space $\mathcal{E}$ such that $\mathcal{G}$ is weakly closed, $\mathcal{H}$ is compact and $T$ is almost cyclic contraction on $\mathcal{G}\cup \mathcal{H}.$ Assume that $T$ is weakly continuous on $\mathcal{G}.$ Then there exists $u\in \mathcal{G}$ such that $\|u-Tu\|=\sigma(Tu, \mathcal{G}).$  
\end{theorem}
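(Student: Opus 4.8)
The plan is to invoke Theorem \ref{Additional1} to produce a candidate pair $(u,v)$, and then to upgrade it to the desired conclusion by identifying the second coordinate $v$ with $Tu$ via the weak continuity hypothesis. First I would run the same iteration $u_{n+1}=Tu_n$ with $u_0\in\mathcal{G}$ and apply Theorem \ref{Additional1} verbatim: this gives a subsequence $\{n_k\}$ along which $u_{2n_k}\to u$ weakly for some $u\in\mathcal{G}$ (here weak closedness of $\mathcal{G}$ guarantees $u\in\mathcal{G}$) and $u_{2n_k+1}\to v$ in norm for some $v\in\mathcal{H}$ (here compactness of $\mathcal{H}$ is used), together with the identity $\|u-v\|=\sigma(v,\mathcal{G})$.

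The crucial observation is the construction identity $u_{2n_k+1}=Tu_{2n_k}$. Since $T$ is weakly continuous on $\mathcal{G}$ and $u_{2n_k}\to u$ weakly, I would conclude $Tu_{2n_k}\to Tu$ weakly. On the other hand, the norm convergence $u_{2n_k+1}\to v$ forces $u_{2n_k+1}\to v$ weakly as well. Uniqueness of weak limits then yields $Tu=v$.

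Feeding this back into the identity furnished by Theorem \ref{Additional1} gives
\begin{eqnarray*}
\|u-Tu\|=\|u-v\|=\sigma(v,\mathcal{G})=\sigma(Tu,\mathcal{G}),
\end{eqnarray*}
which is precisely the claimed best approximation. The only step demanding any care is the interpretation of weak continuity as weak-to-weak sequential continuity, so that the weak limit of $\{Tu_{2n_k}\}$ is genuinely $Tu$ and can be matched against the norm limit $v$; once that is pinned down there is no real obstacle, which is consistent with the paper's assertion that the result "easily follows" from Theorem \ref{Additional1}.
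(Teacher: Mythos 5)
Your proposal is correct and follows essentially the same route as the paper: iterate $T$, extract the subsequence from Theorem \ref{Additional1}, and use weak continuity to identify the limit $v$ of $\{u_{2n_k+1}\}=\{Tu_{2n_k}\}$ with $Tu$ (via uniqueness of weak limits), whence $\|u-Tu\|=\|u-v\|=\sigma(v,\mathcal{G})=\sigma(Tu,\mathcal{G})$. If anything, your explicit identification $v=Tu$ is slightly more careful than the paper's, which only records that $u_{2n_j}-Tu_{2n_j}\to u-Tu$ weakly before citing Theorem \ref{Additional1}.
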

\begin{proof}
Let $u_0\in \mathcal{G}$ and for $n\geq 0,$ define $u_{n+1}= Tu_n.$ Then $\{u_{2n}\}$ is bounded. There is a subsequence $\{n_j\}$ such that $u_{2n_j}\to u$ weakly, for some $u\in \mathcal{G}.$ Since $T$ is weakly continuous, we get $Tu_{2n_j}\to Tu$ weakly. Therefore, $u_{2n_j}-Tu_{2n_j}\to u-Tu$ weakly. By Theorem \ref{Additional1}, $\|u-Tu\|=\sigma(Tu, \mathcal{G}).$
\end{proof} 
For two non-void subsets $\mathcal{G}$ and $\mathcal{H}$ of a normed linear space $\mathcal{E}$ and a cyclic map $T:\mathcal{G}\cup \mathcal{H}\to \mathcal{G}\cup \mathcal{H},$ we say that $T$ has {\it{proximinal property}} if $\{u_n\}$ is a sequence in $\mathcal{G}$ (resp. in $\mathcal{H}$) which converges weakly, say to $u,$ for some $u$ in $\mathcal{G}$ (resp. in $\mathcal{H}$) and $\|u_n-Tu_n\|- \sigma(Tu_n, \mathcal{G})\to 0$ (resp. $\|u_n-Tu_n\|- \sigma(Tu_n, \mathcal{H})\to 0$), then it is the case that $\|u-Tu\|=\sigma(Tu,\mathcal{G})$ (resp., $\|u-Tu\|=\sigma(Tu,\mathcal{H})$). Note that if $\mathcal{G}=\mathcal{H},$ then {\it{proximinal property}} boils down to the demiclosedness of the map $I-T$ at the point $0.$
\begin{theorem}\label{Additional3}
Suppose $\mathcal{G}$ and $\mathcal{H}$ are two non-void subsets of a reflexive Banach space $\mathcal{E}$ such that $\mathcal{G}$ is weakly closed and $T$ is an almost cyclic contraction on $\mathcal{G}\cup \mathcal{H}.$ Assume that $T$ has {\it{proximinal property}} on $\mathcal{G}\cup \mathcal{H}.$ Then there exists $u\in \mathcal{G}$ such that $\|u-Tu\|=\sigma(Tu, \mathcal{G}).$
\end{theorem}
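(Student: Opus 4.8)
The plan is to run the standard Picard iteration and feed a weakly convergent subsequence into the \emph{proximinal property}, which is engineered precisely to deliver the desired equality. First I would fix $u_0\in \mathcal{G}$ and set $u_{n+1}=Tu_n$ for $n\geq 0$, so that $\{u_{2n}\}$ lies in $\mathcal{G}$ and $\{u_{2n+1}\}$ lies in $\mathcal{H}$. By Proposition 3.3 of \cite{Basha2021} the iterates $\{u_n\}$ form a bounded sequence; in particular $\{u_{2n}\}$ is bounded in $\mathcal{E}$.

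Since $\mathcal{E}$ is reflexive, every bounded sequence has a weakly convergent subsequence, so I would extract $\{u_{2n_j}\}$ with $u_{2n_j}\xrightarrow{w} u$ for some $u\in \mathcal{E}$. Because $\mathcal{G}$ is weakly closed, the weak limit satisfies $u\in \mathcal{G}$. This is the only place reflexivity and weak closedness of $\mathcal{G}$ enter, and it is exactly what replaces the compactness of $\mathcal{H}$ assumed in Theorem \ref{Additional1}.

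Next I would verify the asymptotic hypothesis of the \emph{proximinal property} for the even iterates. An almost cyclic contraction is an almost cyclic $\psi$-contraction (Example \ref{ex2}), so Lemma \ref{Approximationlemma} applies and yields $\sigma(u_{2n}, u_{2n+1})-\sigma(u_{2n+1}, \mathcal{G})\to 0$. Rewriting this with $u_{2n+1}=Tu_{2n}$ gives $\|u_{2n}-Tu_{2n}\|-\sigma(Tu_{2n}, \mathcal{G})\to 0$, and the same limit holds along the subsequence indexed by $\{n_j\}$.

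Finally, the sequence $\{u_{2n_j}\}$ lies in $\mathcal{G}$, converges weakly to $u\in \mathcal{G}$, and satisfies $\|u_{2n_j}-Tu_{2n_j}\|-\sigma(Tu_{2n_j}, \mathcal{G})\to 0$. Invoking the \emph{proximinal property} of $T$ on $\mathcal{G}\cup \mathcal{H}$ then immediately gives $\|u-Tu\|=\sigma(Tu, \mathcal{G})$, as required. I do not anticipate a genuine obstacle: the proximinal property is defined so as to hand over the conclusion once its two hypotheses—weak convergence of the iterates to a point of $\mathcal{G}$ and the vanishing of $\|u_n-Tu_n\|-\sigma(Tu_n,\mathcal{G})$—are in place. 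The only points needing care are the appeal to Lemma \ref{Approximationlemma} (legitimate because an almost cyclic contraction is an almost cyclic $\psi$-contraction) and the extraction of a weak limit inside $\mathcal{G}$, which is precisely where reflexivity and the weak closedness of $\mathcal{G}$ are used.
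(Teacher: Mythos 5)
Your proposal is correct and follows essentially the same route as the paper: Picard iteration, boundedness of the iterates via Proposition 3.3 of \cite{Basha2021}, extraction of a weakly convergent subsequence of $\{u_{2n}\}$ using reflexivity and weak closedness of $\mathcal{G}$, the asymptotic estimate from Lemma \ref{Approximationlemma}, and then the proximinal property. Your explicit remark that Lemma \ref{Approximationlemma} applies because an almost cyclic contraction is an almost cyclic $\psi$-contraction (Example \ref{ex2}) is a small justification the paper leaves implicit, but the argument is the same.
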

\begin{proof}
Let $u_0\in \mathcal{G}$ and define $u_{n+1}=Tu_n,~n\geq 0.$ By Proposition 3.3 of \cite{Basha2021}, $\{u_{2n}\}$ and $\{u_{2n+1}\}$ are bounded. Since $\mathcal{G}$ is weakly closed and $\mathcal{E}$ is reflexive, there exist a subsequence $\{u_{2n_k}\}$ and $u$ in $\mathcal{G}$ such that $u_{2n_k}\to u$ weakly. By Lemma \ref{Approximationlemma}, we have $\|u_{2n_k}-u_{2n_k+1}\|-\sigma(u_{2n_k+1}, \mathcal{G})\to 0.$ Using proximinal property, we get $\sigma(u, Tu)=\sigma(Tu, \mathcal{G}).$
\end{proof}

\begin{remark}\label{remark1}
If we assume $\mathcal{E}$ is strictly convex and $\mathcal{G}$ is convex in Theorem \ref{Additional2}, then no $z\in \mathcal{G}$ with $z\neq u$ satisfies $\|z-Tu\|=\sigma(Tu, \mathcal{G}).$  For, if it does, by strict convexity, we have $\left\|Tu-\frac{u+z}{2} \right\|=\left\|\frac{u-Tu}{2}+\frac{z-Tu}{2} \right\|<\sigma(Tu, \mathcal{G}),$ which is absurd.
\end{remark}

It is interesting to investigate the existence and uniqueness of best approximation of an almost cyclic $\psi$-contraction in a reflexive Banach space by dropping the assumption that $\psi$ is continuous.

%

\end{document}